\documentclass[11pt,twoside, a4paper, english, reqno]{amsart}
 \usepackage[dvips]{epsfig}
 \usepackage{amscd}
 \usepackage{amssymb}
 \usepackage{amsthm}
 \usepackage{amsmath}
 \usepackage{bm} 
 \usepackage{latexsym}
 
 \usepackage{upref}
 \usepackage[hidelinks]{hyperref}
 \usepackage{color}
 \usepackage[usenames,dvipsnames]{xcolor}
 \usepackage{enumitem}
 \usepackage{comment}
\usepackage{todonotes}
 \usepackage{ulem}
 \usepackage{float}
\usepackage{tikz}
 
 \theoremstyle{plain}

\renewcommand{\O}{\Omega}

\newcommand{\bel}[1]{\begin{equation}\label{#1}}
\newcommand{\ee}{\end{equation}}

\newtheorem{theorem}{Theorem}[section]
\newtheorem{corollary}[theorem]{Corollary}

\newtheorem{lemma}[theorem]{Lemma}
\newtheorem{proposition}[theorem]{Proposition}

\newtheorem{case}[theorem]{Case}
\theoremstyle{definition}
\newtheorem{definition}[theorem]{Definition}

\title[Inverse obstacle scattering problem]{Uniqueness in inverse elastic scattering from traction-free polygonal obstacles with a single incoming wave}
\author[Hu]{Guanghui Hu}
\address{School of Mathematical Sciences and LPMC, Nankai University (NKU), Tianjin, 300071, China}
\email{ghhu@nankai.edu.cn
 }
\author[Rana]{Vinay Rana}
\address{ 
Department of Mathematics\\
Indian Institute of Technology, Ropar, Rupnagar, Punjab, 140001, India}
\email{vinay.23maz0011@iitrpr.ac.in}

\author[Vashisth]{Manmohan Vashisth}
\address{Department of Mathematics\\
Indian Institute of Technology, Ropar, Rupnagar, Punjab, 140001, India}
\email{manmohanvashisth@iitrpr.ac.in}

\begin{document}
\begin{abstract} 
In this article, we establish uniqueness in the inverse problem of time-harmonic elastic scattering from a bounded, traction-free obstacle in two dimensions. For a single incident plane wave, it is proved that a connected polygonal obstacle is uniquely determined by the measured far-field pattern. The analysis is based on a reflection principle for the Navier equation under traction-free boundary conditions and unique continuation of elastic fields.
\end{abstract}

\maketitle
\vspace{-0.8cm}
\section{Introduction and statement of  Main Result}
\subsection{Mathematical formulations and main result}\label{mathematical formulation}
We begin with a set of notation and definitions that will be used throughout this article. For  a vector $x:=(x_1,x_2) \in \mathbb{R}^2$, we introduce two unit vectors $\widehat{x}:=x/|x|$ and $\widehat{x}^\perp$, the vector obtained by rotating 
$\widehat{x}$ in the counterclockwise direction  by an angle of $\pi/2$. For a vector field $u=(u_1,u_2)$, defined in an open subset of $\mathbb{R}^2$, we use the following notation to represent various differential operators
\begin{align*}
    \begin{aligned}
    \nabla\cdot u:=\partial_1 u_1+\partial_2 u_2, \ \nabla\times u:= \partial_2u_1 - \partial_1u_2,\ 
\nabla u_j :=\begin{bmatrix}
    \partial_1u_j\\ \partial_2u_j
\end{bmatrix} \ \mbox{and} \  \nabla^{\perp}u_j:=\begin{bmatrix}
    -\partial_2 u_j\\
    \partial_1 u_j
\end{bmatrix},
    \end{aligned} 
\end{align*}
where $j=1,2$ and 
$\partial_i$ stands for the first order partial derivative in the direction of $x_i$, for $i=1,2.$ Throughout, the cross-product operator $\nabla\times$ is understood in the two-dimensional sense: applied to a vector field $u=(u_1,u_2)$ it yields the scalar $\nabla\times u=\partial_2u_1-\partial_1u_2$, whereas applied to a scalar function $\varphi$ it yields the vector field $\nabla\times\varphi:=(-\partial_2\varphi,\partial_1\varphi)=\nabla^{\perp}\varphi$. In particular, the identity $\nabla\times(\nabla\times u)=\nabla(\nabla\cdot u)-\Delta u$ holds. We  define the Lam\'e operator $\mathcal{L}$ acting on vector fields in $\mathbb{R}^2$ by \begin{align}\label{Lame oeprator}
    \mathcal{L}u := \mu \Delta u + (\lambda + \mu)\, \nabla( \nabla \cdot u),  
\end{align}
where $\lambda$ and $\mu $ denote the 
Lam\'e constants such that $\mu>0$ and $\lambda+2\mu>0$. In addition, we require $\lambda+\mu>0$, so that the constant $c:=\mu/(\lambda+\mu)$ introduced below is well defined and positive. We also assume that the density of the background medium equals one, i.e., $\rho\equiv 1$. Now, for a fixed frequency $\omega>0$, we denote by $\mathcal{N}_{\omega}$ the Navier operator, defined by \begin{align}\label{Navier operator}
\mathcal{N}_{\omega}u:=\mathcal{L}u+\omega^2 u
\end{align} 
where $\mathcal{L}$ represents the Lam\'e operator as given by equation \eqref{Lame oeprator}. Throughout this article, we assume that the incident field, denoted by $u^{in}$, is a vector field in $\mathbb{R}^2$ and satisfies the following free space Navier equation 
\begin{align}\label{Equation for Incident field} 
\mathcal{N}_{\omega} u^{in}(x):=\mu \Delta u^{in}(x)+ (\lambda + \mu)\, \nabla( \nabla \cdot u^{in})(x) + \omega^2 u^{in}(x)=0,\ x\in \mathbb{R}^2. 
\end{align} 
Next, for a bounded obstacle $D\subset \mathbb{R}^2$ we denote by $\overline{D}$ the closure of $D$ and by $D^c := \mathbb{R}^2 \setminus \overline{D}$ the exterior of $D$. In this article, we  assume that $D^c$  is connected. We also denote by $\nu(x)$ the outward unit normal to $\partial D$ at $x\in \partial D$ and $\partial_\nu u(x):=(\nu\cdot \nabla u_1(x),\ \nu\cdot \nabla u_2(x))$, which stands for the normal derivative of the vector field $u$ at $x\in\partial D$. Building on the above notation and definitions, we have that the propagation of time-harmonic elastic wave equations in an isotropic homogeneous elastic medium outside $D$ with the traction-free boundary condition is modeled by 
\begin{align}\label{Traction free Navier equation}
\begin{aligned}
\begin{cases}
\mathcal{N}_{\omega}u(x)=0,\ \ u(x):=u^{in}(x)+u^{sc}(x),\ \ x\in\mathbb{R}^{2}\setminus{\overline{D}},\\
T_{\nu}(u):= 2\mu\, \partial_\nu u 
+ \lambda\,(\nabla \cdot u)\nu
+ \mu\,(\nabla \times u)\, \nu^{\perp} = 0,\ \text{on } \partial D, 
\end{cases} 
\end{aligned}
\end{align} 
where $u^{in}$ represents  the incident field that satisfies \eqref{Equation for Incident field}, $T_{\nu}$ denotes the surface traction operator on $\partial D$, $u$ is known as the total field, and $u^{sc}$ denotes the scattered field.  
Next, we observe that   any solution $u$ of the first Equation in  {\eqref{Traction free Navier equation}} admits the following Helmholtz decomposition
$$
u = u_p + u_s,
$$
where
\begin{equation*}
u_p := -\frac{1}{k_p^2} \nabla(\nabla \cdot u^{in})-\frac{1}{k_p^2} \nabla(\nabla \cdot u^{sc})
\ \ \mbox{and}\ \  
u_s := \frac{1}{k_s^2} \nabla \times (\nabla \times u^{in})+\frac{1}{k_s^2} \nabla \times (\nabla \times u^{sc}),
\end{equation*}
denote the compressional (P-wave) and shear (S-wave) part of $u$, respectively. Also $k_p := \omega/\sqrt{\lambda + 2\mu}$ and $k_s := \omega/\sqrt{\mu}$ are called the compressional and shear wave numbers, respectively.
Moreover, it can be shown that $u^{in}_{p}:=-k_p^{-2}\,\nabla(\nabla \cdot u^{in})$, $u^{in}_{s}:=k_s^{-2}\,\nabla \times (\nabla \times u^{in})$, $u^{sc}_{p}:=-k_p^{-2}\,\nabla(\nabla \cdot u^{sc})$ and $u^{sc}_{s}:=k_s^{-2}\,\nabla \times (\nabla \times u^{sc})$
satisfy the following set of equations 
\begin{equation*}
\begin{aligned}
\label{eq:Helm}
(\Delta + k_p^2)u^{sc}_p=0,\ (\Delta + k_s^2)u^{sc}_s=0
\ \  \mbox{and} \ \ 
\nabla \times u^{sc}_p=\nabla \cdot u^{sc}_s=0\ {\mbox{in}}\ \  \  D^c,\\
(\Delta + k_p^2)u^{in}_p=0,\ (\Delta + k_s^2)u^{in}_s=0
\ \  \mbox{and} \ \ 
\nabla \times u^{in}_p=\nabla \cdot u^{in}_s=0\ \mbox{in}\  \ \  \mathbb{R}^2,
\end{aligned} 
\end{equation*}
whenever the total displacement field $u$ satisfies the first Equation in \eqref{Traction free Navier equation}.

 \noindent In view of the fact that $ D^c$ is unbounded, a radiation condition at infinity must be prescribed for the scattered field $u^{sc}$ in order to ensure the well-posedness of the elastic scattering problem.  The precise statement of the Kupradze radiation condition is given below.
\begin{definition}
    The scattered field $u^{sc}$ corresponding to the first equation of \eqref{Traction free Navier equation} is said to be a Kupradze radiating solution if its compressional part $u_p^{sc}$ and shear part $u_s^{sc}$ fulfill the Sommerfeld radiation condition for the vector Helmholtz equation, that is, 

\begin{equation}
\label{eq:15}
\lim_{r \to \infty} \sqrt{r}\left( \frac{\partial u_{p}^{\mathrm{sc}}}{\partial r} - i k_{p} u_{p}^{\mathrm{sc}} \right) = 0, \  \  
\lim_{r \to \infty} \sqrt{r}\left( \frac{\partial u_{s}^{\mathrm{sc}}}{\partial r} - i k_{s} u_{s}^{\mathrm{sc}} \right) = 0, \  \   r = |x|,
\end{equation}
and the above limits hold uniformly as $\widehat{x}=x/|x|$ ranges over $\mathbb{S}^{1} := \{ x \in \mathbb{R}^2 : |x|=1 \}$.
\end{definition}
 \noindent The elastic scattering of the incident field $u^{in}$ satisfying \eqref{Equation for Incident field}, from an  impenetrable obstacle $D \subset \mathbb{R}^2$ with Lipschitz boundary $\partial D$, is modeled by the following system of equations:
\begin{align}
\label{eq:7}
   & \mathcal{N}_{\omega}u(x)=0,\ \ u(x):=u^{in}(x)+u^{sc}(x),\ \ x\in\mathbb{R}^{2}\setminus{\overline{D}},\\
   \label{eq:8}
& T_{\nu}(u):= 2\mu\, \partial_\nu u 
+ \lambda\,(\nabla \cdot u)\nu
+ \mu\,(\nabla \times u)\, \nu^{\perp} = 0\ \text{on } \partial D, \\
\label{eq:9}
&\lim_{r \to \infty} \sqrt{r}\left( \frac{\partial u_{p}^{\mathrm{sc}}}{\partial r} - i k_{p} u_{p}^{\mathrm{sc}} \right) = 0,\ \mbox{and} \ \lim_{r \to \infty} \sqrt{r}\left( \frac{\partial u_{s}^{\mathrm{sc}}}{\partial r} - i k_{s} u_{s}^{\mathrm{sc}} \right) = 0, \ r = |x|, 
\end{align} 
where, $ \mathcal{N}_{\omega}$ denotes the Navier operator defined in \eqref{Navier operator}, the boundary condition \eqref{eq:8} corresponds to the traction-free condition on $\partial D$ and the Equation   \eqref{eq:9} represents the Kupradze radiation conditions for the compressional part $u^{sc}_{p}$ and shear part $u^{sc}_{s}$, of  the scattered field $u^{sc}$. Also the limits in Equation \eqref{eq:9}  hold uniformly in all directions $\widehat{x}=x/|x|\in\mathbb{S}^1$. 

\noindent Now it is well known (see, for example, \cite{ammari2015mathematical,bao2018direct,kiriaki1999uniquely}) that the above-mentioned scattering problem \eqref{eq:7}-\eqref{eq:9} has a unique solution   $u\in H_{loc}^1(D^c)$ and due to the radiation conditions given by Equation \eqref{eq:9},  
the  scattered field $u^{sc}$ admits the following asymptotic behavior (see \cite{alves2002far}, \cite{hahner1993uniqueness}, \cite{kupradze2012three})
\begin{equation}
\label{eq:asymptotic}
\begin{aligned}
u^{sc}(x) = 
\frac{e^{i k_p |x|}}{\sqrt{|x|}}\, u_p^\infty(\widehat{x})\, \widehat{x}
\;+\;
\frac{e^{i k_s |x|}}{\sqrt{|x|}}\, u_s^\infty(\widehat{x})\, \widehat{x}^\perp
\;+\; 
O(|x|^{-3/2}), \  \   |x| \to \infty,     
\end{aligned}
\end{equation}
which holds uniformly in all directions $\widehat{x} \in \mathbb{S}^1$. The functions $u_p^\infty$ and $u_s^\infty$ defined on $\mathbb{S}^1$, are known as the far-field patterns of $u_p^{sc}$ and $u_s^{sc}$, respectively. The  far-field pattern $u^{\infty}$ of the scattered field $u^{sc}$ is given by 
$u^\infty(\widehat{x}) := u_p^\infty(\widehat{x})\widehat x + u_s^\infty(\widehat{x})\widehat{x} ^{\perp}$. In view of this, we find that far-field patterns  $u_p^{\infty}$ and 
$u_s^{\infty}$ of  $u_p^{sc}$ and $u_s^{sc}$, respectively, can be determined in terms of  far-field pattern $u^{\infty}$ of $u^{sc}$ by  
\begin{equation*}
    \ u_p^{\infty}(\widehat x) = u^{\infty }(\widehat x)\cdot \widehat x , \ \mbox{and}\   u_s^{\infty}(\widehat x) = u^{\infty}(\widehat x) \cdot \widehat x^{\perp}\ \mbox{for}\  \widehat{x}\in\mathbb{S}^1.
\end{equation*}

\noindent For a given obstacle $D \subset\mathbb{R}^2$ and an incident field $u^{in}$ satisfying \eqref{Equation for Incident field}, the associated direct scattering problem in elasticity is to determine the scattered field $u^{sc}$ and their far-field patterns. The inverse scattering problem, which is the main focus of this work, is to determine the shape of the obstacle $D$ from knowledge of the far-field pattern $u^\infty$ associated with a given incident field $u^{in}$.

\noindent In this article, we consider the incident field  $u^{in}$ to be a time-harmonic elastic plane wave of the form
 \begin{equation}
     \label{eq:plane}
     u^{in}(x) = c_pu_p^{in}(x) + c_su_s^{in}(x)
  \end{equation}
  where
$$
u^{in}_p(x) := d\, e^{i k_p x \cdot d}, \  
u^{in}_s(x) := d^\perp e^{i k_s x \cdot d}
$$
represent the compressional and shear components, respectively. Here, the incident direction is given by  
$d = (\cos\theta,\sin\theta)^{T}$ for some $\theta\in[0,2\pi)$, and 
$d^\perp := (-\sin\theta,\cos\theta)^{T}$ denotes the unit vector orthogonal to $d$. The coefficients $c_p, c_s \in \mathbb{C}$ satisfy $|c_p| + |c_s| \neq 0$, ensuring that the incident field is non-trivial.

 \noindent We establish a uniqueness result for the inverse elastic scattering problem described above. More precisely, our objective is to determine a polygonal obstacle $D \subset \mathbb{R}^2$ in the scattering problem \eqref{eq:7}-\eqref{eq:9} by a single far-field pattern $u^{\infty}(\hat{x})\ \mbox{for}\ \hat{x} \in \mathbb{S}^1$, generated by the incident plane wave \eqref{eq:plane} with a fixed incident direction $d \in \mathbb{S}^1$ and a fixed frequency $\omega >0$. We assume that the obstacle $D$ is a bounded polygonal domain such that the exterior domain $D^c$ is connected. Moreover, the boundary $\partial D$ consists of finitely many straight line segments, so $D$ represents a single polygonal region without cracks. 
 
 The main result of this article is the following uniqueness theorem.
\begin{theorem}
\label{thm:main_result}
Let $D_1 , D_2 \subset \mathbb{R}^2 $ be two bounded polygonal obstacles such that $D_1^c$ and $D_2^c$ are connected. For $j=1, 2$, let $u_j^\infty$ denote the elastic far-field pattern corresponding to the scattering problem \eqref{eq:7}-\eqref{eq:9}, generated by an incident plane wave \eqref{eq:plane} with a fixed incident direction ${d} \in \mathbb{S}^1$ and a fixed frequency $\omega > 0$. If 
\begin{align*}
u_1^\infty(\widehat{x}) = u_2^\infty(\widehat{x}) \  \text{for all } \widehat{x} \in \mathbb{S}^1,
\end{align*}
then 
\begin{align*}    
D_1 = D_2.
\end{align*}
\end{theorem}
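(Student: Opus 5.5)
We argue by contradiction, following the standard path-argument strategy for polygonal obstacles. Suppose $D_1\neq D_2$. By Rellich's lemma applied to the compressional and shear parts separately, together with the uniqueness of the far-field patterns $u_p^\infty,u_s^\infty$, the equality $u_1^\infty=u_2^\infty$ gives $u_1^{sc}=u_2^{sc}$ outside a large disk. Since $D_1^c$ and $D_2^c$ are connected, unique continuation for the Navier equation then gives $u_1=u_2$ in the unbounded component $G$ of $\mathbb{R}^2\setminus(\overline{D_1\cup D_2})$.

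Because both obstacles are polygons and $D_1\ne D_2$, we can find a side $\Gamma$ of, say, $\partial D_2$ that lies on $\partial G$ and such that a portion of $\Gamma$ lies in $D_1^c$ (possibly after swapping indices). On this portion, $u:=u_1$ solves the Navier equation on both sides of $\Gamma$ and satisfies $T_\nu(u)=0$ on $\Gamma$. The key ingredient is a \emph{reflection principle} for traction-free flat boundaries. After a rigid motion, let $\Gamma\subset\{x_2=0\}$, and define the reflection
\[
Ru(x_1,x_2):=\big(u_1(x_1,-x_2),\,-u_2(x_1,-x_2)\big).
\]

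We will prove the following. If $u$ solves $\mathcal{N}_\omega u=0$ in a neighborhood of a flat segment and $T_\nu(u)=0$ there, then $u$ and $Ru$ agree wherever both are defined. The proof goes through the Helmholtz decomposition $u=\nabla\phi+\nabla^\perp\psi$, with $(\Delta+k_p^2)\phi=0$ and $(\Delta+k_s^2)\psi=0$. The traction-free condition on $\{x_2=0\}$ becomes a pair of coupled boundary conditions for $\phi$ and $\psi$. The natural candidate is the even/odd extension: $\phi$ even and $\psi$ odd in $x_2$, which corresponds exactly to the symmetry $u=Ru$ displayed above.

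This step is the main obstacle, because the traction-free condition alone does not decouple into Neumann data for $\phi$ and Dirichlet data for $\psi$. One therefore has to show that $u-Ru$, which solves the Navier equation and has vanishing traction on $\Gamma$, also has vanishing Cauchy data there. For this I would use the constant $c=\mu/(\lambda+\mu)$ flagged in the introduction. My first attempt would be a factorization of $\mathcal L$ that turns the traction condition into a Cauchy–Kovalevskaya-type uniqueness statement for the reflected field. If this fails in general, I would prove the weaker statement that the reflection holds for the particular total field, using that $u$ is analytic up to the line away from corners.

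With the reflection principle in hand, we run the path argument. Starting from a point of $\Gamma\cap D_1^c$, extend $u$ by reflection across the whole line $L\supset\Gamma$ step by step through $D_1^c$. At each step the reflected field is again traction-free on the reflected images of the sides of $D_2$, and unique continuation propagates the identity $u=Ru$ along $L$.

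Following the standard argument (as for Neumann polygons in the acoustic case), we arrive at one of two outcomes:
\begin{enumerate}
\item the reflections continue indefinitely, showing that $u$ is traction-free on an unbounded line or on a set forcing the symmetry $u=Ru$ on all of $\mathbb{R}^2$; or
\item we reach a corner of $D_1$ whose reflection yields a point where $u$ must be analytic, yet corner singularity analysis forbids this.
\end{enumerate}

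In case 1, $u^{sc}=u-u^{in}$ is radiating. The identity $u=Ru$ forces $u^{in}-Ru^{in}=Ru^{sc}-u^{sc}$ to be radiating, while also being an entire solution, so it vanishes. Hence $u^{in}=Ru^{in}$. For the plane wave \eqref{eq:plane}, with $c_p,c_s$ not both zero, this holds only if $d$ is parallel to $L$, and a direct computation shows it also fails in that case. This contradiction shows that $D_1=D_2$.
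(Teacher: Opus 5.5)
There is a genuine gap at exactly the step you flag as the main obstacle. The point-to-point reflection principle $u=Ru$ is false for the traction-free condition, so neither your main route nor your fallback can be completed. With $\Gamma\subset\{x_2=0\}$ and $\nu=e_2$, one has $T_\nu(u)=\bigl(\mu(\partial_2u_1+\partial_1u_2),\,\lambda\partial_1u_1+(\lambda+2\mu)\partial_2u_2\bigr)$. The fields $u^{(1)}=(0,\cos(k_px_2))$ and $u^{(2)}=(\cos(k_sx_2),0)$ are entire solutions of $\mathcal N_\omega u=0$ with vanishing traction on the whole line $x_2=0$. However, $Ru^{(1)}=-u^{(1)}$ and $Ru^{(2)}=u^{(2)}$, so $u^{(1)}+u^{(2)}$ is traction-free and real-analytic yet invariant under neither $R$ nor $-R$. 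The symmetry $u=Ru$ forces $u_2=\partial_2u_1=0$ on the line. This is your even/odd splitting of $\phi,\psi$, i.e.\ a mixed sliding-type condition, which is why point reflections work for the third/fourth boundary value problems mentioned in the introduction. Adding the normal-stress condition imposes three conditions on a $2\times2$ second-order elliptic system. Equivalently, $w=u-Ru$ has $w_2=2u_2$ and $\partial_2w_1=2\partial_2u_1$ on $\Gamma$, and these need not vanish. So no factorization of $\mathcal L$ or Cauchy--Kovalevskaya argument can produce vanishing Cauchy data. Analyticity of the total field up to $\Gamma$ does not help either, since the counterexample is entire. Consequently the propagation of $u=Ru$ along $L$ and your final contradiction via $u^{in}=Ru^{in}$ have no foundation.

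The missing idea, which is the heart of the paper, is that elastic reflection across a traction-free line is \emph{non-local}. For $\gamma\subset\{x_1=0\}$ the paper applies Duffin's continuation lemma not to $u$, which does not vanish on $\gamma$, but to the traction field $v=\bigl((\lambda+2\mu)\partial_1u_1+\lambda\partial_2u_2,\ \mu(\partial_2u_1+\partial_1u_2)\bigr)$. This field does vanish on $\gamma$. For a Lam\'e solution it satisfies $\nabla\cdot v=0$ and $\Delta v=\nabla g$, where $g=2\mu\partial_1 f$ is harmonic and $f=-\frac{\lambda+\mu}{\mu}\nabla\cdot u$, so Duffin's lemma applies with $c_1=0$. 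Integrating in $x_1$ the first-order relations linking $(v,g)$ to $(u,f)$ gives the explicit operator $\mathcal B$ of \eqref{eq:2.5}--\eqref{eq:2.6}, which contains integrals $\int_0^{x_1}$ along the normal segment to $\gamma$. The Navier case is reduced to this by subtracting $\omega^2\int_{\Omega^+}G(\cdot,y)u(y)\,dy$, where $G$ is the traction-free half-plane Green's tensor. The path argument is then run with the maps $R_{\Lambda}\circ\widetilde{\mathcal B}$, which continue $u$ but express no symmetry of it. It stops once either a full line or an infinite sector bounded by two traction-free rays lies in $D_1^c$. Corners are handled not by a singularity analysis but by repeated reflection across the rays of the sector (Lemma~\ref{lem:2}), and the line case by Lemma~\ref{lem:3}. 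Either way $u_1$ extends to an entire solution, so $u_1^{sc}$ is an entire radiating solution and vanishes. The resulting identity $T_\nu(u^{in})=0$ on the edges of $\partial D_1$ is then shown to force $c_p=c_s=0$. Your opening step, Rellich's lemma plus unique continuation giving $u_1=u_2$ in the unbounded component, does match the paper.
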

\subsection{Literature reviews} 
Inverse scattering problems have attracted extensive attention because of their importance in applications such as geophysical exploration, nondestructive testing, and medical imaging \cite{colton2019inverse}. A central theoretical question in this field is the uniqueness problem, namely whether the shape and the physical properties of an unknown obstacle are uniquely determined by the measured scattered data. For a single incident wave, this question is closely connected with the analytic properties of the scattered field and is still open in general, both for the inverse acoustic and for the inverse elastic scattering problem: uniqueness with a single incident wave is known only under additional assumptions on the geometry of the obstacle or on the type of the boundary condition (see, e.g., \cite{colton1977reflection}, \cite{lax1990scattering}, \cite{hahner1993uniqueness}, \cite{elschner2010uniqueness}, \cite{colton2019inverse}, \cite{elschner2019uniqueness}).

For the inverse acoustic and electromagnetic scattering problems, the unique continuation argument, in particular the reflection (mirror) argument, has proved to be a powerful and successful tool for establishing uniqueness with a single incident wave. It yields uniqueness for obstacles possessing a reflection symmetry, such as disks \cite{colton1977reflection}, \cite{lax1990scattering}, and, more importantly, for polygonal and polyhedral obstacles, for which the reflection principle for the Helmholtz equation is applied across the straight edges of the obstacle and then combined with the path argument \cite{cheng2003uniqueness}, \cite{alessandrini2005determining}, \cite{alessandrini2006corrigendum}, \cite{elschner2006uniqueness}, \cite{liu2006uniqueness}, \cite{elschner2008uniqueness}. Closely related is the corner-scattering phenomenon, which shows that the corners and edges of a polygonal or polyhedral obstacle always scatter and can therefore be detected from the far-field pattern \cite{blasten2014corners}, \cite{elschner2015corners}.

The elastic case is considerably more subtle, mainly because the pressure (P) waves and the shear (S) waves governed by the Navier system are coupled. Extensive results on inverse elastic scattering from impenetrable obstacles have been obtained in recent decades. Uniqueness using the far-field pattern $u^{\infty}$ corresponding to infinitely many incident plane waves was established in \cite{hahner1993uniqueness}, \cite{hu2013some} with full-phase data and in \cite{cheng2024uniqueness} with phaseless data. For general elastic obstacles, however, uniqueness using only a single incident wave remains a challenging open problem, and only partial results are available under additional geometric assumptions. For instance, it was shown in \cite{hu2013some} that a rigid ball and a convex Lipschitz polyhedron are uniquely determined by the shear (S) part of the far-field pattern corresponding to a single incident shear wave, while the first uniqueness result for a non-convex polyhedral obstacle was obtained in \cite{elschner2010uniqueness} with at most two incident plane waves for the third and fourth exterior boundary value problems of the Navier equation, in which the normal stress and the tangential displacement vanish on $\partial D$. That approach is motivated by the earlier developments for acoustic and electromagnetic polygonal obstacles \cite{diaz1955reflection}, \cite{cheng2003uniqueness}, \cite{alessandrini2006corrigendum}, \cite{elschner2006uniqueness}, \cite{liu2006uniqueness}, \cite{elschner2008uniqueness} and combines a reflection principle for the Navier equation with the path argument; however, it depends crucially on the particular form of the third and fourth boundary conditions, and its extension to the first and second boundary conditions is not applicable, owing to the lack of an appropriate reflection principle for the Navier equation in that setting. Subsequent progress for the first boundary condition was made in \cite{elschner2019uniqueness}, where different analytical techniques were introduced to overcome this difficulty. More recently, the same circle of ideas has been extended to obstacles coated by a thin layer and modeled by an impedance-type boundary condition: with a single incoming wave, uniqueness was obtained for coated polygonal acoustic obstacles in \cite{hu2020uniqueness} and for coated polyhedral electromagnetic scatterers in \cite{hu2021inverse}; related uniqueness results for electromagnetic problems can be found in \cite{niu2023uniqueness}.

Unlike the acoustic and electromagnetic cases, the reflection principle for the Navier equation under the traction-free boundary condition has only recently become available, and it is precisely this principle that underlies the present article. It is also essentially different from the classical reflection principle for the Helmholtz equation under the Dirichlet or Neumann boundary condition. The latter is of point-to-point type: the reflected value of the field at a point is expressed solely in terms of the value of the field at its mirror image with respect to the reflecting line. The reflection for the Navier (Lam\'e) system, by contrast, is \emph{not} of point-to-point type: the extended field at a point is given by an integral-type formula involving the values of the field and of its derivatives along the whole boundary segment $\gamma$, rather than at a single mirror point \cite{duffin1956analytic}, \cite{elschner2019uniqueness}. It is precisely this non-local character of the elastic reflection that makes the proof of uniqueness for the Navier equation substantially more difficult than in the acoustic case, and it is the main technical obstacle that the present article overcomes.

As a by-product of our uniqueness proof, we further show that the total field $u$ and the scattered field $u^{sc}$ of the traction-free elastic scattering problem cannot be real-analytic around any corner of a polygonal obstacle; see Corollary~\ref{Co-3.5}. This non-analyticity at the corners is the counterpart, for the Navier equation, of the corner-scattering phenomenon for the Helmholtz equation \cite{elschner2015corners}, and it provides a theoretical basis for the numerical inversion of the corners of the obstacle: by locating the points at which the analytic continuation of the wave field breaks down, one can recover the positions of the corner points and thereby reconstruct the shape of a polygonal obstacle. Reconstruction methods relying on the analytic continuation (and on reflection principles) of the wave field have been developed for the inverse acoustic and elastic scattering problems; in particular, the single-wave factorization method for inverse elastic scattering proposed in \cite{elschner2019uniqueness} exploits precisely this kind of analytic-continuation argument, and the same circle of ideas underlies other continuation-based reconstruction methods. We expect that the reflection principle and the corner-based continuation established in this article can be used in a similar fashion to design effective numerical schemes for reconstructing polygonal traction-free elastic obstacles.

The remainder of the paper is organized as follows. In Section~\ref{sec:reflection}, we establish reflection principles for the Navier equation under traction-free boundary conditions, first for the Lam\'e system and then for the Navier operator. In Section~\ref{sec:application}, we apply these reflection principles to analytically extend solutions of the traction-free boundary value problem across the edges and corners of a polygonal obstacle, and we use the resulting continuation argument to prove the uniqueness theorem (Theorem~\ref{thm:main_result}) stated in Section~\ref{mathematical formulation}.

\section{Reflection principle}\label{sec:reflection}
 We begin this Section by fixing some notations which will be  required  while deriving the reflection  principle.
 Let $R_\Lambda$ denote the reflection operator with respect to the  line $\Lambda:=\{(x_1,x_2) \in \mathbb{R}^2  :  x_1 = 0 \}\subset\mathbb{R}^2$, which is given by 
\[R_\Lambda(x_1, x_2):=(-x_1, x_2)\ \mbox{for} \ (x_1,x_2) \in \mathbb{R}^2.\]
We assume that $\Omega \subset \mathbb{R}^2$ is an open connected and  symmetric with respect to the line $\Lambda$ in the sense that 
 $R_\Lambda(x_1, x_2) \in \Omega$  whenever   $(x_1, x_2) \in \Omega$. In addition, throughout this Section we assume that every line segment with endpoints in $\Omega$ which intersects $\Lambda$ orthogonally is entirely contained in $\Omega$. Due to the symmetry assumption on $\Omega$ with respect to  $\Lambda$, we observe that $\Omega$ can be decomposed as 
 \[
\Omega = \Omega^+ \cup \gamma \cup \Omega^- \] where \[
\Omega^{\pm}  := \Omega \cap \{(x_1,x_2)\in\mathbb{R}^2 : \pm x_1>0\} \ \mbox{and}\  \gamma:=\Omega\cap \Lambda=\{(x_1,x_2)\in\Omega:\ x_1=0\}.\]

The reflection geometry of $\Omega$, $\Omega^{\pm}$ and $\gamma$ is illustrated in Figure~\ref{fig:omega_symmetric}.

\begin{figure}[H]
\centering
\begin{tikzpicture}[scale=1.0]
  \fill[blue!8] (0,0) ellipse (1.6 and 1.95);
  \draw[thick] (0,0) ellipse (1.6 and 1.95);
  \draw[dashed] (0,-2.45) -- (0,2.45);
  \draw[very thick,blue!55!black] (0,-1.95) -- (0,1.95);
  \coordinate (P) at (1.05,0.85);
  \coordinate (Q) at (-1.05,0.85);
  \fill (P) circle (1.7pt);
  \fill (Q) circle (1.7pt);
  \draw[<->,thick] (P) -- (Q);
  \node[above,  xshift=-8pt,inner sep=5pt] at (0,1.02) {$R_\Lambda$};
  \node[below right] at (P) {$x$};
  \node[below right,xshift=-14pt,yshift=-2pt] at (Q) {$R_\Lambda x$};
  \node at (0.88,-0.92) {$\Omega^{+}$};
  \node at (-0.88,-0.92) {$\Omega^{-}$};
  \node[right] at (0.07,1.55) {$\gamma$};
  \node at (0.48,2.62) {$\Lambda:\ x_1=0$};
  \node at (2.0,1.5) {$\Omega$};
\end{tikzpicture}
\caption{The symmetric domain $\Omega=\Omega^{+}\cup\gamma\cup\Omega^{-}$: the line $\Lambda=\{x_1=0\}$ is the axis of symmetry, $\gamma=\Omega\cap\Lambda$ is the common boundary, and the two halves $\Omega^{+}=\Omega\cap\{x_1>0\}$ and $\Omega^{-}=\Omega\cap\{x_1<0\}$ are mapped onto each other by the reflection $R_\Lambda$.}
\label{fig:omega_symmetric}
\end{figure}

The present section is divided into  two subsections. The first subsection  deals with the  establishing of reflection principle for Lam\'e system  and in the second subsection, we use this result to prove the reflection principle for the Navier equation. Both reflection principles are derived under traction-free boundary conditions. Before moving forward, we begin with observing that whenever $u$ solves \[\mathcal{L}u:=\mu \Delta u +(\lambda+\mu)\nabla(\nabla\!\cdot u)=0\ \mbox{in}\ D,\] we obtain that 
\[
\mu \Delta u = -(\lambda+\mu)\nabla(\nabla\!\cdot u)\ \mbox{in}\ D,\] where  $D\subset\mathbb{R}^2$ is an open set and $u=(u_1,u_2)$, a vector field defined on $D$.  
Hence, under the assumptions on $\lambda,\mu$ as mentioned in Section \ref{mathematical formulation}, we arrive at 
\[
\Delta u = -\frac{\lambda+\mu}{\mu}\nabla(\nabla\!\cdot u)\ \mbox{in}\ D.\]
Now if we introduce a  scalar function 
\begin{equation}\label{ex-f}
f(x) := -\frac{\lambda+\mu}{\mu}(\nabla\!\cdot u)(x)\ \text{in} \  D.
\end{equation}
Using these notations and aforementioned assumptions on $\lambda,\mu$, we immediately get that 
\begin{align}\label{System of equations}
\Delta u = \nabla f, \ \nabla\!\cdot u = -cf, \ \mbox{and}\ \Delta f=0\ \mbox{in}\ D,
\end{align}
whenever $\mathcal{L}u=0$ in $D$. 
Thus solving $\mathcal{L}u=0$ in $D$ is equivalent to solving the system \eqref{System of equations}. 
Motivated by this, 
we first state the following lemma, which will be required to derive the reflection principle for the Lam\'e system. For a detailed proof of the following lemma, we refer to  \cite{duffin1956analytic}. 
 \begin{lemma}[{{\cite[Theorem-2]{duffin1956analytic}}}]\label{lemma:1}
Let $\Lambda$, $\gamma$ and the domains $\Omega^\pm$ be defined as above. Suppose that $u=(u_1,u_2):\Omega^+\to\mathbb{R}^2$ and $f:\Omega^+\to\mathbb{R}$
satisfy
\begin{equation}
\label{eq:duffin-system}
\begin{cases}
\Delta u(x) = \nabla f(x)\ \ \mbox{and}\  \ 
\nabla\cdot u(x) = -c f(x)\ \text{for}\  x\in\Omega^+,\\
u(x)=0\ \mbox{for}\  x\in\gamma, 
\end{cases}
\end{equation}
where  $\Delta f=0$ in $\Omega^+$. 
Then the functions $u$ and $f$ admit a continuation to $\Omega$ satisfying
\eqref{eq:duffin-system} throughout $\Omega$. This continuation satisfies the identities 
\begin{align*}
 u_1^{\prime}(x_1,x_2)
&=
- u_1(x_1,x_2)
+ \frac{x_1}{1+2c}
\bigl(
2c\, f(x_1,x_2)
+ 2\,\partial_1 u_1(x_1,x_2)
- x_1\,\partial_1 f(x_1,x_2)
\bigr), \\
 u_2^{\prime}(x_1,x_2)
&=
- u_2(x_1,x_2)
+ \frac{x_1}{1+2c}
\bigl(
-2\,\partial_2 u_1(x_1,x_2)
+ x_1\,\partial_2 f(x_1,x_2)
\bigr), \\
f^{\prime}(x_1,x_2)
&=
\frac{1}{1+2c}
\bigl(
f(x_1,x_2)
- 2c\, f(x_1,x_2)
- 4\,\partial_1 u_1(x_1,x_2)
+ 2x_1\,\partial_1 f(x_1,x_2)
\bigr).
\end{align*}
Here $u^{\prime}$ denotes $u(-x_1,x_2)$, etc. It is assumed that $c\neq -1,-\tfrac12$.
\end{lemma}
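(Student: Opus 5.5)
The plan is to \emph{define} the continuation to $\Omega^-$ by the three displayed formulas and verify directly that the glued pair solves \eqref{eq:duffin-system} in all of $\Omega$. For a point $y=(-x_1,x_2)\in\Omega^-$, so that $x=(x_1,x_2)\in\Omega^+$, we set $u(y):=u'(x)$ and $f(y):=f'(x)$, where $u',f'$ are the right-hand sides of the formulas in Lemma~\ref{lemma:1}, built from $u,f$ and their first derivatives at $x$. The geometric hypotheses on $\Omega$ (symmetry, and the segment condition) ensure that $R_\Lambda$ maps $\Omega^+$ onto $\Omega^-$, so this definition makes sense. I will assume $u\in C^2(\Omega^+\cup\gamma)$ and $f\in C^1(\Omega^+\cup\gamma)$. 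If only weaker boundary regularity is available, I would first obtain it from elliptic regularity for the Dirichlet problem near the flat boundary $\gamma$.

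\textbf{Step 1: the equations hold in $\Omega^-$.} I would compute $\Delta$ and $\nabla\cdot$ of the reflected fields. The only tools needed are the chain rule for $x_1\mapsto -x_1$, which flips the sign of $\partial_1$, the identity
\[
\Delta(x_1 g)=x_1\Delta g+2\partial_1 g,
\]
its analogue for $x_1^2 g$, and the relations $\Delta u=\nabla f$, $\nabla\cdot u=-cf$, $\Delta f=0$ in $\Omega^+$. As a first check, $f'$ is harmonic: since $\Delta\partial_1u_1=\partial_1^2 f$ and $\Delta(x_1\partial_1 f)=2\partial_1^2 f$, we get
\[
\Delta\bigl(-4\partial_1u_1+2x_1\partial_1f\bigr)=-4\partial_1^2f+4\partial_1^2f=0.
\]
Next one checks $\nabla\cdot u'=-cf'$ (with the sign flip in $\partial_1$ accounted for), and then $\Delta u'=\nabla f'$. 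This bookkeeping is where the coefficient $1/(1+2c)$ and the combination $2cf+2\partial_1u_1$ are forced, and where $c\neq-\tfrac12$ is used.

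\textbf{Step 2: matching on $\gamma$.} At $x_1=0$ the formulas give $u'=-u=0$, so $u$ is continuous across $\gamma$. On $\gamma$ we have $u=0$, hence $\partial_2u_1=\partial_2u_2=0$ there, and so $\partial_1u_1=\nabla\cdot u=-cf$. Substituting gives
\[
f'=\frac{(1-2c)f+4cf}{1+2c}=f \quad\text{on } \gamma.
\]
Tangential derivatives then match automatically. For the normal derivatives I would differentiate the formulas in $x_1$ at $x_1=0$, remembering that $\partial_{y_1}=-\partial_{x_1}$, and use the boundary relations again. For example, $\partial_{y_1}u_1(y)$ at $y_1=0$ equals
\[
\partial_1u_1-\frac{2cf+2\partial_1u_1}{1+2c}=\partial_1 u_1 \quad\text{on }\gamma,
\]
again because $\partial_1u_1=-cf$; the analogous computation handles $u_2$. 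Thus the glued $u$ is $C^1$ and the glued $f$ is continuous across $\gamma$.

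\textbf{Step 3: removing the interface.} With $u\in C^1$ and $f\in C^0$ across $\gamma$, and the system holding classically on both sides, an integration by parts against test functions supported near $\gamma$ shows the interface terms cancel. Hence $\Delta u=\nabla f$ and $\nabla\cdot u=-cf$ hold in the distributional sense in $\Omega$. It follows that $\mathcal{L}u=0$ weakly in $\Omega$ (via the equivalence with \eqref{System of equations}, valid since $c=\mu/(\lambda+\mu)\neq-1$). Interior elliptic regularity for the strongly elliptic Lam\'e system then makes $u$ smooth, indeed real-analytic, in $\Omega$, and $f=-c^{-1}\nabla\cdot u$ is harmonic.

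I expect the main obstacle to be Step 1, specifically verifying $\Delta u'=\nabla f'$ for both components. It is a lengthy but mechanical computation, and sign errors from the reflection are easy to make. To keep it manageable, I would organize it by writing $u'=-u+x_1A+x_1^2B$ and computing $\Delta$ of each term separately, using $\Delta A$ and $\Delta B$ from the system.
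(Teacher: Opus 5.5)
The paper gives no proof of this lemma; it is quoted from Duffin. Your direct-verification route is therefore a self-contained alternative. It is the same strategy the paper itself uses for Theorem~\ref{thm:reflection} in Steps 5--6: matching across $\gamma$, then checking the reflected system in $\Omega^-$. Your Step 3 even supplies the interface argument that the paper leaves implicit.

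Steps 1 and 2 are correct. With $U:=u'$ and $F:=f'$, the reflected system $\Delta U_1=-\partial_1F$, $\Delta U_2=\partial_2F$, $-\partial_1U_1+\partial_2U_2=-cF$, $\Delta F=0$ holds identically in $\Omega^+$ using only the interior equations. The condition $u=0$ enters only in Step 2, through $\partial_2u=0$ and $\partial_1u_1=-cf$ on $\gamma$, and your matching computations there are right.

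The gap is in Step 3. You obtain regularity by writing $c=\mu/(\lambda+\mu)$, invoking the strongly elliptic Lam\'e system, and recovering $f=-c^{-1}\nabla\cdot u$. In the lemma, however, $c$ is a free constant. A strongly elliptic Lam\'e operator with $\mu/(\lambda+\mu)=c$ exists only for $c>0$ or $c<-1$. For $c=0$ there is no such operator, and $-c^{-1}\nabla\cdot u$ is meaningless. This is exactly the value $c_1=0$ at which the paper applies the lemma to $(v,g)$. Since you showed only that the glued $f$ is continuous, not $C^1$, across $\gamma$, harmonicity of $f$ in $\Omega$ is then left unproved.

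The repair is short and covers every $c\neq-1$. From the distributional identities $\Delta u=\nabla f$ and $\nabla\cdot u=-cf$ in $\mathcal{D}'(\Omega)$ you get $\Delta f=\nabla\cdot(\Delta u)=\Delta(\nabla\cdot u)=-c\,\Delta f$, so $(1+c)\Delta f=0$. Hence $f$ is harmonic by Weyl's lemma. Then $\Delta u_j=\partial_jf$ with real-analytic right-hand side makes $u$ real-analytic. This is where $c\neq-1$ is really used.

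A side remark: you may need to derive the $C^2$-up-to-$\gamma$ regularity you assume. That boundary-regularity step is where $c\neq-\tfrac12$ is needed in an essential way, not just for dividing by $1+2c$. At $c=-\tfrac12$ the half-plane model problem has the decaying solution $u=x_1(1,-i)e^{-x_1+ix_2}$, $f=2e^{-x_1+ix_2}$ with $u=0$ on $\Lambda$. So the complementing condition for this Dirichlet problem fails there.
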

\subsection{Reflection principle for Lam\'e system} The present Subsection is devoted to establishing the reflection principle for the Lam\'e system stated in the following theorem. 
\begin{theorem}
\label{thm:reflection}
Suppose that $u(x_1,x_2)=(u_1(x_1,x_2),u_2(x_1,x_2))$ is a solution to  the following Lam\'e system with the traction-free boundary  condition 
\begin{equation}
\label{Reflection: Lame system traction free condition}
\begin{cases}
\mathcal{L}u=\mu \Delta u + (\lambda + \mu)\nabla(\nabla \cdot u)  = 0\quad \text{in }\; \Omega^+, \\
 T_{\nu}(u)=2\mu\, \partial_\nu u + \lambda\, (\nabla \cdot u)\nu 
+ \mu\, (\nabla \times u)\, \nu^{\perp} = 0\quad \text{on }\; \gamma.
\end{cases}
\end{equation}
Then $u$ can be analytically extended from $\Omega^+$ to  $\Omega$ as a solution to \eqref{Reflection: Lame system traction free condition}. Moreover, the extended field $\widetilde u(x_1,x_2)=(\widetilde u_1(x_1,x_2),\widetilde u_2(x_1,x_2))$ on $\Omega$  is explicitly given by 
 \begin{equation}
 \label{eqn:Extension}
    \widetilde u(x_1,x_2) :=
        \begin{cases}
          u(x_1,x_2) &\text{  if  } (x_1, x_2) \in \Omega^+ \cup \gamma,\\
          \mathcal{B}u(R_\Lambda (x_1,x_2)) &\text{ if  } (x_1,x_2) \in \Omega^-,
        \end{cases}
    \end{equation}
    where $\mathcal{B}$ is the linear operator mapping the space of analytic functions in $\Omega^+$ into itself and given by $\mathcal{B}u=((\mathcal{B}u)_1,(\mathcal{B}u)_2)$ where, for $(x_1,x_2)\in \Omega^+$,
    \begin{align}\label{eq:2.5}
 (\mathcal{B}u)_1(x_1,x_2)
&=
\frac{\lambda+3\mu}{\lambda+\mu}\,
u_1(x_1,x_2)
-\frac{2\mu}{\lambda+\mu}\,
u_1(0,x_2)
+\frac{\mu(\lambda+2\mu)}{(\lambda+\mu)^2}
\int_0^{x_1} f(s,x_2)\,ds \notag
\\
&\quad
-2x_1\,\partial_1u_1(x_1,x_2)
-\frac{2\mu}{\lambda+\mu}\,
x_1f(x_1,x_2)
+x_1^2\,\partial_1f(x_1,x_2),
\end{align}
and 
\begin{align}\label{eq:2.6}
(\mathcal{B}u)_2(x_1,x_2)
&=
u_2(x_1,x_2)
+\frac{2\mu}{\lambda+\mu}
\int_0^{x_1}\partial_2u_1(s,x_2)\,ds
-\frac{2\mu}{\lambda+\mu}\,
x_1\partial_2u_1(0,x_2) \notag \\ & \ \
+2x_1\partial_2u_1(x_1,x_2)
+\frac{\mu(\lambda+2\mu)}{(\lambda+\mu)^2}
\int_0^{x_1}(x_1-t)\,
\partial_2f(t,x_2)\,dt
\notag \\
&\quad
-x_1^2\partial_2f(x_1,x_2).
\end{align}
Here the function $f$ is given by \eqref{ex-f}.
\end{theorem}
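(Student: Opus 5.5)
Because the traction-free condition is of Neumann type, we do not reflect $u$ itself. We build an auxiliary field $v$ that satisfies the system \eqref{eq:duffin-system} together with a Dirichlet condition on $\gamma$, apply Lemma~\ref{lemma:1} to $v$, and then invert the construction to recover the extension of $u$. Concretely, on $\gamma$ we have $\nu=(-1,0)$ and $\nu^\perp=(0,-1)$. The traction condition $T_\nu(u)=0$ then splits into two scalar conditions. The first component gives $2\mu\partial_1u_1+\lambda\nabla\cdot u=0$, equivalently $(\lambda+2\mu)\partial_1u_1+\lambda\partial_2u_2=0$. The second gives $2\mu\partial_1u_2+\mu(\partial_2u_1-\partial_1u_2)=0$, i.e. $\partial_1u_2+\partial_2u_1=0$. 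These are conditions on the strain/stress components $\sigma_{11}$ and $\sigma_{12}$ along $\gamma$.

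The plan is to take a first-order differential combination $v=\mathcal{D}u$ which vanishes on $\gamma$ exactly when the traction vanishes, and which still solves $\Delta v=\nabla g$, $\nabla\cdot v=-cg$ for a suitable harmonic $g$. A natural choice is $v=\partial_1 u + \alpha(\ldots)$, since $\partial_1$ commutes with $\mathcal{L}$. One adds to $\partial_1 u$ a correction built from $\nabla^\perp$-type derivatives and from $f$, chosen so that $v|_\gamma$ is a combination of $\sigma_{11}$ and $\sigma_{12}$. For instance, one can try
\[
v_1=\partial_1u_1+\beta f,\qquad v_2=\partial_1u_2+\partial_2u_1 \ (\text{or a multiple of the stress}),
\]
and fix $\beta$ and the accompanying $g$ (a linear combination of $\partial_1 f$ and similar terms) so that the Duffin system holds. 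We then verify, using $\Delta f=0$ and $\nabla\cdot u=-cf$, that $v$ lies in the class covered by Lemma~\ref{lemma:1} with $c=\mu/(\lambda+\mu)\neq -1,-\tfrac12$, and that $v=0$ on $\gamma$.

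Lemma~\ref{lemma:1} then extends $v$ and $g$ to $\Omega$ with explicit formulas in terms of $v,g$ and their derivatives at $(x_1,x_2)$. To recover $u$ we integrate in $x_1$ from $0$. We write $u(x_1,x_2)=u(0,x_2)+\int_0^{x_1}\partial_1u(s,x_2)\,ds$, where $\partial_1u$ is expressed through $v$ and $f$, and $f$ itself is continued via the formula for $f'$ (up to the relation between $g$ and $f$). Transporting the integrals to $\Omega^+$ by the substitution $s\mapsto -s$ produces the terms $\int_0^{x_1}f(s,x_2)\,ds$, $u_1(0,x_2)$ and $\int_0^{x_1}(x_1-t)\partial_2 f\,dt$ appearing in \eqref{eq:2.5}--\eqref{eq:2.6}. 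Repeated integrations by parts then turn these into the stated form of $\mathcal{B}u$.

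It remains to check the three properties of $\widetilde u$. First, $\widetilde u$ is $C^1$ across $\gamma$, and indeed analytic there, because it is built from analytic continuations of $v$ and $f$. Second, $\mathcal{L}\widetilde u=0$ in $\Omega^-$; this can also be checked directly by differentiating $\mathcal{B}u\circ R_\Lambda$. Third, the traction of $\widetilde u$ on $\gamma$ is zero. Uniqueness of analytic continuation then shows that $\widetilde u$ is the analytic extension, and the symmetry hypothesis on $\Omega$ (segments orthogonal to $\Lambda$ lie in $\Omega$) makes the line integrals from $0$ to $x_1$ well defined.

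The main obstacle is the first step. We must find the correct combination $v=\mathcal{D}u$ and harmonic $g$ such that $v$ genuinely satisfies Duffin's coupled system with the \emph{same} constant $c$, not merely some Lamé system with altered parameters, and such that $v|_\gamma=0$ is equivalent to $T_\nu u=0$. If no such combination works with constant $c$, I would instead try $v=\partial_1 u$ with a modified $g$ and absorb the boundary condition via an added gradient of a harmonic function. Matching the final coefficients such as $\frac{\mu(\lambda+2\mu)}{(\lambda+\mu)^2}$ is then routine but lengthy bookkeeping.
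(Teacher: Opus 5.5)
Your overall architecture matches the paper's: build an auxiliary field vanishing on $\gamma$, apply Lemma~\ref{lemma:1}, integrate back in $x_1$, then verify. The gap is at the crux, the choice of $v$, which you leave open, and the target you set for it cannot be met.

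Write $\sigma_{11}:=(\lambda+2\mu)\partial_1u_1+\lambda\partial_2u_2=2\mu\partial_1u_1-\lambda c f$ and $\sigma_{12}:=\mu(\partial_1u_2+\partial_2u_1)$.

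\emph{Your concrete candidate fails.} Vanishing on $\gamma$ forces $\beta=-\lambda/(2(\lambda+\mu))$, and then $v=(\sigma_{11}/(2\mu),\,\sigma_{12}/\mu)$. The two components are scaled inconsistently: for every $\beta$ one has $\Delta v=(\partial_1^2 f,\,2\partial_{12}f)$, which is a gradient only if $\partial_{112}f\equiv 0$.

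\emph{The target ``Duffin's system with the same constant $c=\mu/(\lambda+\mu)$'' is unattainable within your ansatz.} The argument runs as follows.
\begin{enumerate}
\item A constant-coefficient first-order $v$ that vanishes on $\gamma$ for all traction-free solutions must have the form $v=M(\sigma_{11},\sigma_{12})^{\top}$ for a constant matrix $M$. This holds because the tangential traces $\partial_2 u|_\gamma$ of such solutions are arbitrary.
\item Since $\Delta(\sigma_{11},\sigma_{12})^{\top}=2\mu\,\partial_1\nabla f$, the field $\Delta v$ is a gradient only if $M=\begin{pmatrix}a&b\\-b&a\end{pmatrix}$. In that case $g=2\mu(a\partial_1 f+b\partial_2 f)$.
\item We have $\partial_1\sigma_{11}+\partial_2\sigma_{12}=(\mathcal{L}u)_1=0$ and $\partial_1\sigma_{12}-\partial_2\sigma_{11}=2\mu\,\partial_2 f$. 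Hence $\nabla\cdot v=2\mu b\,\partial_2 f$.
\item Matching $\nabla\cdot v=-c_1 g$ leaves only two options: $c_1=0$ (when $b=0$) or $c_1=-1$ (when $a=0$). The value $c_1=-1$ is excluded by the lemma.
\end{enumerate}

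\emph{Your fallback does not help.} $v=\partial_1 u$ does satisfy Duffin's system with constant $c$, but it does not vanish on $\gamma$. ``Absorbing the boundary condition via a harmonic gradient'' is not a construction.

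\textbf{The missing observation.} Lemma~\ref{lemma:1} only requires the constant to avoid $-1$ and $-\tfrac12$, so $c_1=0$ is admissible. The paper takes $v$ to be the traction itself, $v=(\sigma_{11},\sigma_{12})$; this is $\mu$ times your candidate with $v_1$ doubled. Then $\nabla\cdot v=(\mathcal{L}u)_1=0$, $\Delta v=\nabla g$ with $g=2\mu\partial_1 f$ harmonic, and $v|_\gamma=0$. So the lemma applies with $c_1=0$.

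\textbf{Completing the recovery.} From here your inversion plan works, but in a definite order:
\begin{enumerate}
\item recover $\widetilde f$ from $2\mu\partial_1\widetilde f=\widetilde g$;
\item recover $\widetilde u_1$ from $2\mu\partial_1\widetilde u_1=\widetilde v_1+\lambda c\widetilde f$;
\item recover $\widetilde u_2$ from $\mu\partial_1\widetilde u_2=\widetilde v_2-\mu\partial_2\widetilde u_1$.
\end{enumerate}
The boundary relations $2\mu\partial_1u_1=\lambda c f$ and $\partial_2u_1=-\partial_1u_2$ on $\gamma$ are what reduce the result to exactly \eqref{eq:2.5}--\eqref{eq:2.6}.

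These $x_1$-ODEs alone do not yield $\Delta\widetilde u=\nabla\widetilde f$ and $\nabla\cdot\widetilde u=-c\widetilde f$ in $\Omega^-$. The paper checks $C^1$ matching across $\gamma$ and the PDE by hand. Your remark that $\widetilde u$ is analytic ``because it is built from analytic continuations'' can be made rigorous, and would shorten that check. Integrate from a line $x_1=\epsilon>0$ inside $\Omega^+$ instead of from $\gamma$. Then $\widetilde f$ and $\widetilde u$ are visibly analytic on $\Omega$, and $\mathcal{L}\widetilde u=0$ follows from the identity theorem.
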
 
\begin{proof}
The traction-free condition does not force $u$ to vanish on the flat part $\gamma$, so Duffin\'s analytic continuation lemma cannot be applied to $u$ directly. The key point is that the traction $v:=T_\nu u$ vanishes on $\gamma$ and that the pair $(v,g)$, formed from $v$ and the harmonic function $f$, satisfies the hypotheses of Duffin\'s lemma. Applying it gives an explicit continuation of $(v,g)$ across $\gamma$. Since $(u,f)$ and $(v,g)$ are linked by the first-order system \eqref{System of equations}, integrating that system reconstructs the continuation of $(u,f)$, which is then verified to solve the Lam\'e system in all of $\Omega$. For clarity, we divide the proof into six steps.

\medskip
\noindent \textbf{Step 1 (Equivalent first-order formulation).}
 Let $u=(u_1,u_2)$ be a solution of the Lamé system
 \begin{equation}\label{eq:lame_recall}
\mu \Delta u + (\lambda+\mu)\nabla(\nabla\!\cdot u)=0\ \text{in } \Omega^+,
\end{equation}
satisfying the traction--free boundary condition 
\begin{equation}\label{eq:traction_recall}
T_{\nu}(u)=2\mu\,\partial_\nu u+\lambda(\nabla\!\cdot u)\nu
+\mu(\nabla\times u)\nu^\perp=0
\  \text{on } \gamma.
\end{equation}
As explained earlier, we have that the  equation \eqref{eq:lame_recall} can be written in the equivalent form
\begin{equation}\label{eq:equivalent_form}
\Delta u = \nabla f, \ 
\nabla\!\cdot u = -cf, \ \mbox{and}\  \Delta f=0\ \mbox{in}\ \Omega^{+},\ \mbox{where $c=\frac{\mu}{\lambda+\mu}$}. 
\end{equation}
 \medskip
\noindent \textbf{Step 2 (The auxiliary field $v$ and reduction to Duffin\'s system).} Now since $u$ satisfies the traction–free boundary condition \eqref{eq:traction_recall} on $\gamma$, the displacement field $u$ does not necessarily vanish on $\gamma$. 
Consequently, unlike in \cite{elschner2019uniqueness}, Lemma~\ref{lemma:1} cannot be applied directly to $u$. To overcome this difficulty, we introduce a new vector field constructed from the traction operator on $\Omega^{+}$ by 
\begin{equation} \label{eq T operator}
   v:= 2\mu\,\partial_\nu u+\lambda(\nabla\!\cdot u)\nu
+\mu(\nabla\times u)\nu^\perp, 
\end{equation}
 with the understanding that the outward unit normal $\nu$ to $\gamma$ is extended as a smooth function on $\Omega^{+}$ and $u$ solves \eqref{eq:lame_recall}-\eqref{eq:traction_recall}.
Without loss of generality, we may assume that
$$
\gamma \subset \Lambda := \{(x_1,x_2)\in\mathbb{R}^2 : x_1=0\},
$$
with outward unit normal $\nu=(1,0)$ and tangential vector $\nu^\perp=(0,1)$.
Indeed, a direct computation then shows that
$$
\partial_\nu u = \begin{bmatrix}
   \partial_1 u_1 \\ \partial_1 u_2 
\end{bmatrix},
\ 
(\nabla\!\cdot u)\nu= \begin{bmatrix}
    \partial_1 u_1+\partial_2 u_2 \\ 0
\end{bmatrix},
$$
and
$$
(\nabla\times u)\nu^\perp= \begin{bmatrix}
    
0 \\ \partial_2 u_1-\partial_1 u_2
\end{bmatrix}.
$$
Substituting these expressions in \eqref{eq T operator} and using the fact that $u$ solves \eqref{eq:lame_recall}-\eqref{eq:traction_recall},  we get that 
\begin{align}
\begin{aligned}
\begin{cases}
    v= 2\mu \begin{bmatrix}
\partial_1 u_1 \\ \partial_1 u_2         
    \end{bmatrix} + \lambda \begin{bmatrix}
        \partial_1 u_1 + \partial_2 u_2 \\ 0
    \end{bmatrix} + \mu \begin{bmatrix}
        0 \\ \partial_2 u_1 - \partial_1 u_2
    \end{bmatrix}
    &= \begin{pmatrix}
(\lambda+2\mu)\partial_1 u_1+\lambda\partial_2 u_2\\[4pt]
\mu(\partial_2 u_1+\partial_1 u_2)
\end{pmatrix}\ \mbox{in}\ \Omega^{+},\\
v=0\ \mbox{on}\ \gamma.
\end{cases} 
\end{aligned} 
\end{align}
Now if we define
$$
v(x):= \begin{bmatrix}
    v_1(x) \\ v_2(x)
\end{bmatrix}, \ \ x\in \Omega^+
$$ then 
\begin{equation}\label{eq: new vector field}
\begin{cases}
v_1(x_1, x_2):=(\lambda+2\mu)\partial_1u_1(x_1,x_2)+\lambda\partial_2 u_2(x_1,x_2)\quad \mbox{for}\ (x_1,x_2)\in\Omega^{+}\\ 
v_2(x_1,x_2):=\mu(\partial_2 u_1(x_1, x_2)+\partial_1 u_2(x_1, x_2))\quad \mbox{for}\  (x_1,x_2)\in\Omega^{+}\\
v_1(x_1,x_2)=v_2(x_1,x_2)=0\quad \mbox{for}\ (x_1,x_2)\in \gamma. 
\end{cases}
\end{equation}
 We next verify that the vector field $v=(v_1,v_2)$ defined above, satisfies the assumptions of Lemma~\ref{lemma:1}, that is, 
\begin{equation}
\begin{cases}
\Delta v(x) = \nabla g(x),\ 
\nabla\cdot v(x) = -c_1 g(x), \ x\in \Omega^{+}\\
v(x)=0,\ x\in \gamma
\end{cases}
\end{equation} for some scalar function $g$  satisfying   $\Delta g=0,$ in $\Omega^+$, and the constant $c_1\neq -1,-\tfrac12$. Below we will provide the  detailed expressions for the aforementioned function $g$ and constant $c_1$. 
 Now definition of $v$, yields that  
$$
\nabla\!\cdot v=\partial_1 v_1+\partial_2 v_2.
$$
Using \eqref{eq: new vector field}, we compute
\begin{align*}
\nabla\!\cdot v
&=\partial_1\!\left((\lambda+2\mu)\partial_1 u_1+\lambda\partial_2 u_2\right)
 +\partial_2\!\left(\mu(\partial_2 u_1+\partial_1 u_2)\right)\\
&=(\lambda+2\mu)\partial_1^2 u_1
 +\lambda\partial_{12} u_2
 +\mu\partial_2^2 u_1
 +\mu\partial_{12} u_2\\
 &= \lambda \partial_1^2 u_1 + \mu \partial_1^2 u_1 + \mu \partial_1^2 u_1 + \lambda \partial_{12} u_2 + \mu \partial_2^2 u_1 + \mu \partial_{12} u_2\\
 &= \lambda \partial_1^2 u_1 + \mu \Delta u_1 + \mu \partial_1^2 u_1 + \lambda \partial_{12} u_2 + \mu \partial_{12} u_2\\
 &=(\lambda + \mu)\partial_1^2 u_1 + \mu \Delta u_1 + (\lambda + \mu) \partial_{12} u_2\\
 &=(\lambda + \mu)\partial_1(\partial_1 u_1 + \partial_2 u_2) + \mu \Delta u_1=(\lambda + \mu)\partial_1(\nabla \cdot u) + \mu \Delta u_1. 
\end{align*}
Thus, we get 
\begin{equation} \label{eq div v}
    \nabla\!\cdot v
=\mu\Delta u_1+(\lambda+\mu)\partial_1(\nabla\!\cdot u).
\end{equation}
Invoking \eqref{eq:equivalent_form}, namely
$
\Delta u_1=\partial_1 f
$ and $\nabla\!\cdot u=-cf $ with $c=\mu/(\lambda+\mu)$ in \eqref{eq div v},  we obtain
\begin{equation}\label{divfree}
 \nabla\!\cdot v
=\mu\partial_1 f-(\lambda+\mu)c\,\partial_1 f
=0\quad \text{in } \Omega^+.   
\end{equation}
Next, we compute the Laplacian of $v$ component-wise.
For the first component,
\begin{align*}
\Delta v_1
=(\lambda+2\mu)\partial_1(\Delta u_1)
 +\lambda\partial_2(\Delta u_2)=(\lambda+2\mu)\partial_1^2 f+\lambda\partial_2^2 f
=\lambda\Delta f+2\mu\partial_1^2 f.
\end{align*}
Utilizing the fact that  $f$ is harmonic in $\Omega^+$, we get 
$$
\Delta v_1=2\mu\partial_1^2 f.
$$
Similarly, for the second component,
\begin{align*}
\Delta v_2
=\mu\partial_2(\Delta u_1)+\mu\partial_1(\Delta u_2)=\mu\partial_2(\partial_1 f)+\mu\partial_1(\partial_2 f)
=2\mu\partial_{12} f.
\end{align*}
Consequently,
\begin{align} \label{eq:v_system}
\begin{aligned}
\Delta v =\begin{bmatrix}
    \Delta v_1 \\ \Delta v_2
\end{bmatrix}
=\begin{pmatrix}
2\mu\partial_1^2 f\\
2\mu\partial_{12} f
\end{pmatrix} = 2\mu \partial _1 \begin{bmatrix}
  \partial _1f \\ 
  \partial _2 f  
\end{bmatrix}
= 2\mu \partial_1(\nabla f) =\nabla (2\mu \partial_1 f):=
\nabla g\ \mbox{in}\ \Omega^{+} 
\end{aligned}
\end{align}
 where \begin{equation}\label{eq:g_relation}
g(x):=2\mu\,\partial_1 f(x) \ \ \text{in} \ \ \Omega^+.
\end{equation}
Since $f$ is harmonic in $\Omega^+$, we further have
\begin{equation}
    \Delta g
=2\mu\,\partial_1(\Delta f)
=0\quad \text{in } \Omega^+.
\end{equation}
Therefore, the pair $(v,g)$ satisfies the system
\begin{equation}
\label{eq:required}
\begin{cases}
\Delta v(x) = \nabla g(x),\ 
\nabla\cdot v(x) = -c_1 g(x), \ x\in \Omega^{+}\\
v(x)=0,\ x\in \gamma
\end{cases}
\end{equation}
where $g(x) =2\mu\,\partial_1 f(x)$ is harmonic in $\Omega^+$ and from \eqref{divfree} we can take $c_1=0$, such that $\nabla\cdot v(x) = -c_1 g(x)\ \text{for all}\ x\in \Omega^{+}.$  \medskip

\noindent \textbf{Step 3 (Continuation of $(v,g)$ via Duffin\'s lemma).} Now since $c_1\neq -1,-\tfrac12$, all the assumptions of Lemma~\ref{lemma:1} are fulfilled. Hence, $v$ and $g$ admit an analytic continuation from $\Omega^+$ to $\Omega$ through $\gamma$, that satisfies the equations in \eqref{eq:required} throughout $\Omega$. Here $g(x_1,x_2)$ on $\gamma$ is the limit of g as $x_1\to 0$. Moreover,  if we denote by $\widetilde{v}_{j}$ ($1\leq j\leq 2$) and $\widetilde{g}$,  the analytic continuation of $v_j$ ($1\leq j\leq 2$) and $g$, respectively, from $\Omega^{+}$ to $\Omega$, given by 
\begin{equation}\label{eq:tilda}
\begin{cases}
\widetilde{v}_1(-x_1,x_2)
=-v_1(x_1,x_2)
+x_1\bigl(2\partial_1 v_1(x_1,x_2) - x_1\partial_1 g(x_1,x_2)\bigr),\  \  \  (x_1,x_2)\in\Omega^{+},\\\\
\widetilde v_2(-x_1,x_2)
=-v_2(x_1,x_2)
+x_1\bigl(-2\partial_2 v_1(x_1, x_2) + x_1\partial_2 g(x_1, x_2)\bigr),\ \  (x_1,x_2)\in\Omega^{+},\\\\
\widetilde g(-x_1,x_2)
=g(x_1,x_2) - 4\partial_1 v_1(x_1,x_2)
+2x_1\partial_1 g(x_1,x_2),\  \   \  \  \  (x_1,x_2)\in\Omega^{+}.
\end{cases}
\end{equation}
 After substituting $v_1 , v_2$ from
\eqref{eq: new vector field} and  $g(x) =2\mu\,\partial_1 f(x)$ into the identities \eqref{eq:tilda}, and using the relations
$\Delta u=\nabla f$ and $\nabla\!\cdot u=-cf$, we arrive at 
\begin{equation*}
\begin{cases}
\begin{aligned}
\widetilde v_1(-x_1,x_2)
&= -(\lambda +2\mu )\partial_1 u_1(x_1,x_2) -\lambda \partial_2 u_2(x_1,x_2)  + 2(\lambda+ 2 \mu)x_1 \partial_1^2 u_1(x_1,x_2)\\
& \ \ + 2\lambda x_1 \partial_{12} u_2(x_1,x_2) -2\mu x_1^2\partial_1^2f(x_1,x_2), \  \  \  (x_1,x_2)\in\Omega^{+},
\end{aligned}\\\\
\begin{aligned}
\widetilde v_2(-x_1,x_2)
&=-\mu (\partial_2 u_1(x_1,x_2) +  \partial_1 u_2(x_1,x_2)) -2(\lambda +2 \mu)x_1 \partial_ {12} u_1 (x_1,x_2)\\
& \ \ -2 \lambda x_1 \partial _2^2 u_2(x_1,x_2) + 2\mu x_1^2 \partial_{12}f(x_1,x_2), \  \  \  (x_1,x_2)\in\Omega^{+}, 
\end{aligned}\\\\
\begin{aligned}
\widetilde g(-x_1,x_2)
&=2\mu\,\partial_1 f(x_1,x_2)
- 4(\lambda+2\mu) \partial_1^2 u_1(x_1,x_2)
-4\lambda \partial_{12} u_2 (x_1,x_2)
\\& \ \ +4\mu x_1 \partial_1^{2} f(x_1,x_2), \  \  \  (x_1,x_2)\in\Omega^{+}.
\end{aligned}
\end{cases}
\end{equation*} 
 After rearranging the terms and using divergence formula,  the above expression reduces to  
\begin{equation*}
\begin{cases}
\begin{aligned}
\widetilde v_1(-x_1,x_2)
&= -\lambda (\nabla \cdot u(x_1,x_2)) + 2\lambda x_1 \partial_1(\nabla \cdot u(x_1,x_2))- 2\mu \partial_1 u_1(x_1,x_2) \\&
\ \ + 4\mu x_1 \partial_1^2 u_1(x_1,x_2)-2\mu x_1^2\partial_1^2f(x_1,x_2), \  \  \  (x_1,x_2)\in\Omega^{+}, 
\end{aligned}\\\\

\begin{aligned}
\widetilde v_2(-x_1,x_2)
&=-\mu (\partial_2 u_1(x_1,x_2) +\partial_1 u_2(x_1,x_2)) -2\lambda x_1 \partial_2(\nabla \cdot u(x_1,x_2) ) \\& \ \ -4 \mu x_1 \partial_{12} u_1(x_1,x_2) + 2\mu x_1^2 \partial_{12}f(x_1,x_2), \  \  \  (x_1,x_2)\in\Omega^{+},
\end{aligned}
 \\\\

\begin{aligned}
\widetilde g(-x_1,x_2)
&=2\mu\,\partial_1 f(x_1,x_2) -4\lambda \partial_1 (\nabla \cdot u(x_1,x_2))
-8\mu \partial_1^2 u_1(x_1,x_2) \\&
\ \ +4\mu x_1 \partial_1^{2} f(x_1,x_2), \  \  \  (x_1,x_2)\in\Omega^{+}.
\end{aligned}
\end{cases}
\end{equation*}
Next we use  \eqref{eq:equivalent_form}, to obtain 
\begin{equation}\label{eq 30}
\begin{cases}
\begin{aligned}
\widetilde v_1(-x_1,x_2)
&= \lambda c f(x_1,x_2)
-2\lambda c x_1 \partial_1 f(x_1,x_2)
- 2\mu \partial_1 u_1(x_1,x_2)\\&
\ \ + 4\mu x_1 \partial_1^2 u_1(x_1,x_2)-2\mu x_1^2\partial_1^2f(x_1,x_2), \  \  \  (x_1,x_2)\in\Omega^{+},
\end{aligned}\\\\
\begin{aligned}
\widetilde v_2(-x_1,x_2)
&=-\mu (\partial_2 u_1(x_1,x_2) +\partial_1 u_2(x_1,x_2)) +2\lambda c x_1 \partial_2 f(x_1,x_2) \\& \ \  -4 \mu x_1 \partial_{12} u_1(x_1,x_2) + 2\mu x_1^2 \partial_{12}f(x_1,x_2), \  \  \  (x_1,x_2)\in\Omega^{+},
\end{aligned}\\\\
\begin{aligned}
\widetilde g(-x_1,x_2)
&=2\mu \partial_1 f(x_1,x_2)
+4\lambda c \partial_1 f(x_1,x_2)
\\& \ \ -8\mu \partial_1^{2}u_1(x_1,x_2) 
+4\mu x_1 \partial_1^{2} f(x_1,x_2), \  \  \  (x_1,x_2)\in\Omega^{+}.
\end{aligned}
\end{cases}
\end{equation}
\medskip
\noindent \textbf{Step 4 (Continuation of $(u,f)$).}  The Step 3 gives the analytic continuation of $v_j$ ($1\leq j\leq 2$) and $g$ from $\Omega^{+}$ to $\Omega$ across $\gamma$. We next use the analytically continued fields $\widetilde v$ and $\widetilde g$ to determine corresponding
extensions of $u_1$, $u_2$, and $f$ across $\gamma$. More precisely, we seek functions $\widetilde u(x_1,x_2)=(\widetilde u_1(x_1,x_2),\widetilde u_2(x_1,x_2))$ and $\widetilde f(x_1,x_2)$ defined in $\Omega$ such that $\widetilde u_j(x_1,x_2)=u_j(x_1,x_2)$, 
 $(j=1,2)$ and $\widetilde f(x_1,x_2)=f(x_1,x_2)$ in $\Omega^+$.
 Since the fields $(v,g)$ and $(u,f)$ are related by
\eqref{eq: new vector field} and \eqref{eq:g_relation}, we require their continuations
to satisfy the corresponding identities
\begin{equation}\label{eq:2.14}
\begin{cases}
\widetilde v_1(-x_1,x_2)
= 2\mu\,\partial_1\widetilde u_1(-x_1,x_2)-\lambda c\,\widetilde f(-x_1,x_2),  \ \  (x_1,x_2)\in\Omega^{+} \\\\
\widetilde v_2(-x_1,x_2)
=\mu\partial_2\widetilde u_1(-x_1,x_2)+\mu\partial_1\widetilde u_2(-x_1,x_2), \ \  (x_1,x_2)\in\Omega^{+} \\\\
\widetilde g(-x_1,x_2)
=2\mu\,\partial_1\widetilde f(-x_1,x_2), \ \  (x_1,x_2)\in\Omega^{+}.
\end{cases}
\end{equation}
Since $(\widetilde v_1,\widetilde v_2)$ and $\widetilde g$ are already known from \eqref{eq:tilda}, these relations determine a system of first-order differential equations for the unknown functions $(\widetilde u_1,\widetilde u_2) $ and $\widetilde f$.

Comparing the explicit expressions \eqref{eq:2.14} with \eqref{eq 30}, we obtain the following
identities 
\begin{align}
\partial_1\widetilde u_1(-x_1,x_2)
&=\frac{\lambda c}{2\mu} \widetilde f(-x_1,x_2) + \frac{\lambda c}{2\mu} f(x_1,x_2)-\frac{\lambda c}{\mu} x_1 \partial_1 f(x_1,x_2) \notag \\& \ \ -\partial_1 u_1 (x_1,x_2)
+2x_1\,\partial_1^2 u_1(x_1,x_2) - x_1^2 \partial_1^2 f(x_1,x_2),\ \  (x_1,x_2)\in\Omega^{+} 
, \label{eq:ext_u1}
\\\notag \\ 
\partial_1\widetilde u_2(-x_1,x_2)
&= - \partial_2 \widetilde u_1(-x_1,x_2) -\partial_2 u_1(x_1,x_2)-\partial_1 u_2(x_1,x_2) \nonumber \\ & \ \ +\frac{2\lambda c}{\mu}x_1 \partial_2 f(x_1,x_2) -4x_1 \partial_{12} u_1(x_1,x_2) \notag \\ & \ \ + 2x_1^2 \partial_{12} f(x_1,x_2), \ \ (x_1,x_2)\in\Omega^{+},
 \label{eq:ext_u2}
\\\notag\\
\partial_1\widetilde f(-x_1,x_2)
&=\partial_1 f(x_1,x_2) + \frac{2\lambda c}{\mu}\,\partial_1 f(x_1,x_2) \nonumber \\& \ \ -4\,\partial_1^2 u_1(x_1,x_2)
+2x_1\,\partial_1^2 f(x_1,x_2), \ \  (x_1,x_2)\in\Omega^{+}. \label{eq:ext_f}
\end{align}
To determine $\widetilde u_1$, $\widetilde u_2$, and $\widetilde f$, we integrate the
first-order system \eqref{eq:ext_u1}-\eqref{eq:ext_f} with respect to the variable $x_1$,
while keeping $x_2$ fixed.
This yields
\begin{align*}
\widetilde f(-x_1,x_2)
&= \widetilde f(0,x_2)
-\left(1+\frac{2\lambda c}{\mu}\right)
\bigl(f(x_1,x_2)-f(0,x_2)\bigr)
+4\bigl(\partial_1 u_1(x_1,x_2)-\partial_1 u_1(0,x_2)\bigr) \\
&\ \
-2\int_0^{x_1} s\,\partial_1^2 f(s,x_2)\,ds .
\end{align*}
Using integration by parts, the last term can be written as
$$
2\int_0^{x_1} s\,\partial_1^2 f(s,x_2)\,ds
=2x_1\,\partial_1 f(x_1,x_2)
-2\bigl(f(x_1,x_2)-f(0,x_2)\bigr).
$$
Substituting this identity into the previous expression, we obtain
\begin{align*}
\widetilde f(-x_1,x_2)
&= \widetilde f(0,x_2)
-\left(1+\frac{2\lambda c}{\mu}\right)\bigl(f(x_1,x_2)-f(0,x_2)\bigr)
+4\bigl(\partial_1 u_1(x_1,x_2)-\partial_1 u_1(0,x_2)\bigr) \\
&\ \
-2x_1\,\partial_1 f(x_1,x_2)+2( f(x_1,x_2)- f(0,x_2)).
\end{align*}
After combining similar terms, we get
\begin{align*}
\widetilde f(-x_1,x_2)
&= \widetilde f(0,x_2)
+\frac{\mu-\lambda}{\lambda+\mu}\bigl(f(x_1,x_2)-f(0,x_2)\bigr)
+4\bigl(\partial_1 u_1(x_1,x_2)-\partial_1 u_1(0,x_2)\bigr) \\
&\ \ 
-2x_1\,\partial_1 f(x_1,x_2).
\end{align*}
Since $\widetilde f$ is required to coincide with $f$ on $\gamma$, we have
$$
\widetilde f(0,x_2)=f(0,x_2).
$$
Consequently, the above expression simplifies to
\begin{align}
\widetilde f(-x_1,x_2)
&= \frac{2\lambda}{\lambda+\mu}\,f(0,x_2)+\frac{\mu-\lambda}{\lambda+\mu}\,f(x_1,x_2)
+
4\,\partial_1 u_1(x_1,x_2)
-4\,\partial_1 u_1(0,x_2) \nonumber\\
&\ \ 
-2x_1\,\partial_1 f(x_1,x_2).
\label{eq:tilde_f_formula}
\end{align}
Next, we substitute \eqref{eq:tilde_f_formula} into equation \eqref{eq:ext_u1}, we have
\begin{align}
\partial_1 \widetilde u_1(-x_1,x_2)
&= \frac{\lambda c}{2\mu} 
 \Big[\frac{2\lambda}{\lambda +\mu} f(0,x_2)+\frac{\mu-\lambda}{\lambda +\mu} f(x_1,x_2)
+4(\partial_1 u_1(x_1,x_2) 
- \partial_1 u_1(0,x_2))\notag \\
& \ \ - 2x_1 \partial_1 f(x_1,x_2) \Big]+\frac{\lambda c}{2\mu} f(x_1,x_2)-\frac{\lambda c}{\mu} x_1 \partial_1 f(x_1,x_2)-\partial_1 u_1(x_1,x_2) \notag \\ 
& \ \ +2x_1\,\partial_1^2 u_1(x_1,x_2)- x_1^2 \partial_1^2 f(x_1,x_2) \notag
\end{align}
After combining similar terms, we get
\begin{align} \label{eq 39}
\partial_1 \widetilde u_1(-x_1,x_2)
&= \left(\frac{\lambda}{\lambda + \mu}\right)^2f(0,x_2)+\frac{\mu\lambda}{(\lambda + \mu)^2} f(x_1,x_2)  +\frac{\lambda - \mu}{\lambda +\mu}
\partial_1 u_1(x_1,x_2) \notag \\
& \ \ - \frac{2\lambda}{\lambda + \mu}\,
\partial_1 u_1(0,x_2) -  \frac{2\lambda}{\lambda + \mu}\,
x_1\partial_1 f(x_1,x_2) +2x_1\,\partial_1^2 u_1(x_1,x_2)- x_1^2\,\partial_1^2 f(x_1,x_2).
\end{align}
Integrating \eqref{eq 39} with respect to $x_1$ from $0$ to $x_1$, we arrive at
the explicit representation
\begin{equation}\label{eq:ext_u1_integrated}
\begin{aligned}
\widetilde u_1(-x_1,x_2)
&= \widetilde u_1(0,x_2)- \left(\frac{\lambda}{\lambda+\mu}\right)^2 x_1f(0,x_2)\,
- \frac{\mu\lambda}{(\lambda+\mu)^2}
\int_0^{x_1} f(s,x_2)\,ds
\\
&\ 
-\frac{\lambda-\mu}{\lambda+\mu}
\big(u_1(x_1,x_2)-u_1(0,x_2)\big)
+ \frac{2\lambda}{\lambda+\mu}\,x_1\partial_1 u_1(0,x_2)
\\
&\ 
 + \frac{2\lambda}{\lambda + \mu} \int_0^{x_1} s\,\partial_1 f(s,x_2)\,ds- 2\int_0^{x_1} s\,\partial_1^2 u_1(s,x_2)\,ds + \int_0^{x_1} s^2\,\partial_1^2 f(s,x_2)\,ds
 .
\end{aligned}
\end{equation}

Using the integration by parts, we have
$$
\hspace{-1.9cm}
\int_0^{x_1} s\,\partial_1 f(s,x_2)\,ds
= x_1 f(x_1,x_2)
-\int_0^{x_1} f(s,x_2)ds.
$$
$$
\hspace{-1.9cm}
\int_0^{x_1} s\,\partial_1^2 u_1(s,x_2)\,ds
= x_1\partial_1 u_1(x_1,x_2)
- u_1(x_1,x_2)
+ u_1(0,x_2).
$$
and
$$
\int_0^{x_1} s^2\,\partial_1^2 f(s,x_2)\,ds
= x_1^2\,\partial_1 f(x_1,x_2)
-2x_1 f(x_1,x_2)
+2\int_0^{x_1} f(s,x_2)\,ds,
$$
Substituting these identities into \eqref{eq:ext_u1_integrated}, we get
\begin{equation}
\label{abcd}
\begin{aligned}
\widetilde u_1(-x_1,x_2)
&= \widetilde u_1(0,x_2) - \left(\frac{\lambda}{\lambda+\mu}\right)^2 x_1 f(0,x_2)
+ \frac{\mu(\lambda + 2\mu)}{(\lambda+\mu)^2}
\int_0^{x_1} f(s,x_2)\,ds
\\
&\ \ 
+ \frac{\lambda+3\mu}{\lambda+\mu}
\big(u_1(x_1,x_2)-u_1(0,x_2)\big)
+ \frac{2\lambda}{\lambda+\mu}\,x_1\partial_1 u_1(0,x_2)
\\
&\ \ - 2x_1\partial_1 u_1(x_1,x_2) - \frac{2\mu}{\lambda+\mu}x_1 f(x_1,x_2) + x_1^2\,\partial_1 f(x_1,x_2).
\end{aligned}
\end{equation}
Since then, we have
$$
\widetilde u_1(0,x_2)=u_1(0,x_2).
$$
Moreover, from the boundary condition $v_1=0$ on $\gamma$ and the identity
$
v_1=2\mu\,\partial_1u_1-\lambda c\,f,
$
it follows that
$$
2\mu\,\partial_1u_1(0,x_2)
=
\lambda c\,f(0,x_2).
$$
Since $c=\mu/(\lambda+\mu)$, we obtain
\begin{equation} \label{partial u_1}
    \partial_1u_1(0,x_2)
=
\frac{\lambda}{2(\lambda+\mu)}\,f(0,x_2)   
\end{equation}
Substituting these relations into \eqref{abcd}, we arrive at
\begin{equation}\label{eq:tilde_u1_formula}
\begin{aligned}
\widetilde u_1(-x_1,x_2)
&=
\frac{\lambda+3\mu}{\lambda+\mu}\,
u_1(x_1,x_2)
-\frac{2\mu}{\lambda+\mu}\,
u_1(0,x_2)
\\
&\quad
+\frac{\mu(\lambda+2\mu)}{(\lambda+\mu)^2}
\int_0^{x_1} f(s,x_2)\,ds
\\
&\quad
-2x_1\,\partial_1u_1(x_1,x_2)
-\frac{2\mu}{\lambda+\mu}\,
x_1f(x_1,x_2)
+x_1^2\,\partial_1f(x_1,x_2).
\end{aligned}
\end{equation}
This proves Equation \eqref{eq:2.5}. 
Next we determine $\widetilde{u}_2(-x_1,x_2)$ . First, we compute $\partial_2 \widetilde{u}_1(-x_1,x_2)$ from \eqref{eq:tilde_u1_formula}, 
 Differentiating with respect to $x_2$. Since $x_1$ is fixed while differentiating in $x_2$, we get
 \begin{equation}\label{eq:ok}
\begin{aligned}
\partial_2 \widetilde{u}_1(-x_1,x_2)
&= \frac{\lambda+3\mu}{\lambda+\mu}\,
\partial_2 u_1(x_1,x_2) -\frac{2\mu}{\lambda+\mu}\,
\partial_2u_1(0,x_2)
+\frac{\mu(\lambda+2\mu)}{(\lambda+\mu)^2}
\int_0^{x_1} \partial_2f(s,x_2)\,ds
\\
&\ 
- 2x_1\partial_{12} u_1(x_1,x_2) -\frac{2\mu}{\lambda+\mu}\,
x_1 \partial_2 f(x_1,x_2)+ x_1^2\,\partial_{12} f(x_1,x_2).
\end{aligned}
\end{equation}
Substituting \eqref{eq:ok} into \eqref{eq:ext_u2} 
we obtain 
\begin{align*}
\partial_1 \widetilde{u}_2(-x_1,x_2)
&= -\frac{\lambda+3\mu}{\lambda+\mu}\,\partial_2 u_1(x_1,x_2) +\frac{2\mu}{\lambda+\mu}\,\partial_2u_1(0,x_2)
-\frac{\mu(\lambda+2\mu)}{(\lambda+\mu)^2}\,
\int_0^{x_1} \partial_2f(s,x_2)\,ds
\\
&\ \ 
+ 2x_1\partial_{12} u_1(x_1,x_2) +\frac{2\mu}{\lambda+\mu}\,
x_1 \partial_2 f(x_1,x_2)- x_1^2\partial_{12} f(x_1,x_2)\\
&\ \
- \partial_2 u_1(x_1,x_2)-\partial_1 u_2(x_1,x_2)
+\frac{2\lambda }{\lambda+\mu}x_1\,\partial_2 f(x_1,x_2)\\ & \ \
-4x_1\,\partial_{12} u_1(x_1,x_2)+ 2x_1^2\,\partial_{12} f(x_1,x_2).
\end{align*}
After combining similar terms, we get
\begin{align}\label{eq:ext_u2_rearranged}
\partial_1 \widetilde u_2(-x_1,x_2)
&= -\frac{2(\lambda+2\mu)}{\lambda+\mu}\,\partial_2 u_1(x_1,x_2) +\frac{2\mu}{\lambda+\mu}\,\partial_2u_1(0,x_2)-\partial_1u_2(x_1,x_2) \notag   \\ & \ \
-\frac{\mu(\lambda+2\mu)}{(\lambda+\mu)^2}\,
\int_0^{x_1} \partial_2f(s,x_2)\,ds
- 2x_1\partial_{12} u_1(x_1,x_2) +2
x_1 \partial_2 f(x_1,x_2) \notag \\ & \ \ + x_1^2\partial_{12} f(x_1,x_2).
\end{align}

Integrating the above identity with respect to $x_1$ from $0$ to $x_1$, and using
$$
\widetilde u_2(0,x_2)=u_2(0,x_2),
$$
we obtain
\begin{align*}
\widetilde u_2(-x_1,x_2)
&=
u_2(0,x_2)
+\frac{2(\lambda+2\mu)}{\lambda+\mu}
\int_0^{x_1}\partial_2u_1(s,x_2)\,ds
-\frac{2\mu}{\lambda+\mu}\,
x_1\partial_2u_1(0,x_2)\\\\ & \ \
+\int_0^{x_1}\partial_1u_2(s,x_2)\,ds
+\frac{\mu(\lambda+2\mu)}{(\lambda+\mu)^2}
\int_0^{x_1}\int_0^s
\partial_2f(t,x_2)\,dt\,ds
\\\\
&\ \
+2\int_0^{x_1}s\,\partial_{12}u_1(s,x_2)\,ds
-2\int_0^{x_1}s\,\partial_2f(s,x_2)\,ds
-\int_0^{x_1}s^2\,\partial_{12}f(s,x_2)\,ds.
\end{align*}
Using integration by parts, we have
$$
\int_0^{x_1}\partial_1u_2(s,x_2)\,ds
=
u_2(x_1,x_2)-u_2(0,x_2),
$$\\
$$
2\int_0^{x_1}s\,\partial_{12}u_1(s,x_2)\,ds
=
2x_1\partial_2u_1(x_1,x_2)
-2\int_0^{x_1}\partial_2u_1(s,x_2)\,ds,
$$\\
and
$$
\int_0^{x_1}s^2\,\partial_{12}f(s,x_2)\,ds
=
x_1^2\partial_2f(x_1,x_2)
-2\int_0^{x_1}s\,\partial_2f(s,x_2)\,ds,
$$\\
we arrive at
\begin{equation}\label{eq:tilde_u2_formula}
\begin{aligned}
\widetilde u_2(-x_1,x_2)
&=
u_2(x_1,x_2)
+\frac{2\mu}{\lambda+\mu}
\int_0^{x_1}\partial_2u_1(s,x_2)\,ds
-\frac{2\mu}{\lambda+\mu}\,
x_1\partial_2u_1(0,x_2) \\ & \ \
+2x_1\partial_2u_1(x_1,x_2)
+\frac{\mu(\lambda+2\mu)}{(\lambda+\mu)^2}
\int_0^{x_1}(x_1-t)\,
\partial_2f(t,x_2)\,dt
\\
&\quad
-x_1^2\partial_2f(x_1,x_2).
\end{aligned}
\end{equation}

  \medskip
\noindent \textbf{Step 5 (Regularity of the continuation across $\gamma$).} We next verify that the functions \eqref{eq:tilde_f_formula}, \eqref{eq:tilde_u1_formula} and \eqref{eq:tilde_u2_formula} indeed define a smooth continuation of the original field across $\gamma$. By the choice of the integration constant in the construction above, the extended field satisfies
$$
\widetilde u=u,\qquad \widetilde f=f
\quad\text{on }\gamma.
$$ Equivalently, these identities can be checked directly by setting $x_1=0$ in
\eqref{eq:tilde_f_formula}, \eqref{eq:tilde_u1_formula} and \eqref{eq:tilde_u2_formula}.
We next verify that the first-order derivatives also agree across
$\gamma$. 

Now, letting $x_1\to0^+$ in \eqref{eq 39}, we obtain
$$
\begin{aligned}
(\partial_1 \widetilde u_1)(0,x_2)
&=
\left(\frac{\lambda}{\lambda+\mu}\right)^2f(0,x_2)
+\frac{\mu\lambda}{(\lambda+\mu)^2}f(0,x_2) \\
&\quad
+\frac{\lambda-\mu}{\lambda+\mu}
\partial_1u_1(0,x_2)
-\frac{2\lambda}{\lambda+\mu}
\partial_1u_1(0,x_2).
\end{aligned}
$$
Combining the first two terms and the last two terms gives
$$
(\partial_1 \widetilde u_1)(0,x_2)
=
\frac{\lambda}{\lambda+\mu}f(0,x_2)
-\partial_1u_1(0,x_2).
$$
Using \eqref{partial u_1}, we have
$$
\frac{\lambda}{\lambda+\mu}f(0,x_2)
=
2\partial_1u_1(0,x_2),
$$
and hence
\begin{equation} \label{del_1 tildau_1}
(\partial_1 \widetilde u_1)(0,x_2)
=
\partial_1u_1(0,x_2).
\end{equation}
For the second component, letting $x_1\to0^+$ in \eqref{eq:ext_u2_rearranged}, we get
$$
\begin{aligned}
(\partial_1 \widetilde u_2)(0,x_2)
&=
-\frac{2(\lambda+2\mu)}{\lambda+\mu}
\partial_2u_1(0,x_2)
+\frac{2\mu}{\lambda+\mu}
\partial_2u_1(0,x_2) 
-\partial_1u_2(0,x_2) \\
&=
-2\partial_2u_1(0,x_2)
-\partial_1u_2(0,x_2).
\end{aligned}
$$
On the other hand, the second component of the traction-free boundary
condition gives
$$
v_2
=
\mu\big(\partial_2u_1+\partial_1u_2\big)
=
0
\qquad\text{on }\gamma.
$$
Since $\mu>0$, it follows that
$$
\partial_2u_1(0,x_2)
=
-\partial_1u_2(0,x_2).
$$
Consequently,
\begin{equation} \label{del_1 tildau_2}
(\partial_1 \widetilde u_2)(0,x_2)
=
\partial_1u_2(0,x_2).
\end{equation}
We next verify the corresponding identity for $\widetilde f$. Since
$\nabla\cdot v=0$ in $\Omega^+$ and $v_2=0$ on $\gamma$, we have
$$
\partial_1v_1(0,x_2)+\partial_2v_2(0,x_2)=0.
$$
As $v_2(0,x_2)=0$ on $\gamma$, differentiation in the
tangential direction yields
$
\partial_2v_2(0,x_2)=0,
$
and therefore
$
\partial_1v_1(0,x_2)=0.
$
Using
$$
v_1(x_1,x_2)=2\mu\partial_1u_1(x_1,x_2)-\lambda c f(x_1,x_2)\ in \ \Omega^+,
$$
we obtain
$$
2\mu\partial_{11}u_1(0,x_2)
=
\lambda c\,\partial_1f(0,x_2),
$$
that is,
\begin{equation}   \label{del11}
\partial_{11}u_1(0,x_2)
=
\frac{\lambda}{2(\lambda+\mu)}
\partial_1f(0,x_2).
\end{equation}
Now, letting $x_1\to0^+$ in \eqref{eq:ext_f}, we have
$$
\begin{aligned}
(\partial_1 \widetilde f)(0,x_2)
&=
\left(1+\frac{2\lambda c}{\mu}\right)
\partial_1f(0,x_2)
-4\partial_{11}u_1(0,x_2).
\end{aligned}
$$
Since
$
c=\mu/(\lambda+\mu),
$
and using \eqref{del11}, it follows that

\begin{equation}\label{del_1 tildaf}
\begin{aligned}
(\partial_1\widetilde f)(0,x_2)
&=
\left(1+\frac{2\lambda}{\lambda+\mu}\right)
\partial_1f(0,x_2)
-\frac{2\lambda}{\lambda+\mu}
\partial_1f(0,x_2) \\
&=
\partial_1f(0,x_2).
\end{aligned}
\end{equation}
The tangential derivatives agree automatically. Indeed, since
$
\widetilde u_j(0,x_2)=u_j(0,x_2),
\ j=1,2,
$
and
$
\widetilde f(0,x_2)=f(0,x_2),
$
as functions of the tangential variable $x_2$, differentiation with
respect to $x_2$ gives
$
\partial_2\widetilde u_j(0,x_2)
=
\partial_2u_j(0,x_2),
\ j=1,2,
$
and
$
\partial_2\widetilde f(0,x_2)
=
\partial_2f(0,x_2).
$
Combining these identities with \eqref{del_1 tildau_1}, \eqref{del_1 tildau_2}, and \eqref{del_1 tildaf}, we conclude that
$$
\widetilde u\in C^1(\Omega;\mathbb{R}^2),
\qquad
\widetilde f\in C^1(\Omega).
$$

\medskip
\noindent \textbf{Step 6 (The continuation solves the Lam\'e system in $\Omega$).} It remains to verify that the extension field $\widetilde u$  satisfies the Lam\'e system in the whole domain $\Omega$. 
Since $\widetilde u(x_1, x_2)=u(x_1,x_2)$ and $\widetilde f(x_1,x_2)=f(x_1,x_2)$ in $\Omega^+$ by construction,
it suffices to show that the pair $(\widetilde u(x_1,x_2),\widetilde f(x_1,x_2))$ satisfies
\begin{equation}\label{eq:duffin_extended}
\Delta \widetilde u(x_1,x_2)=\nabla \widetilde f(x_1,x_2),
\ \
\nabla\cdot \widetilde u(x_1,x_2)=-c \widetilde f(x_1,x_2),
\ \
\Delta \widetilde f(x_1,x_2)=0
\quad \mbox{in }\Omega,
\end{equation}
where $c=\mu/(\lambda+\mu)$.
For $(x_1,x_2)\in\Omega^+$, define
$$
F(x_1,x_2):= \widetilde f(-x_1,x_2),
$$
and
$$
U_j(x_1,x_2):= \widetilde u_j(-x_1,x_2),
\ \ j=1,2.
$$
Then proving \eqref{eq:duffin_extended} in $\Omega^-$ is equivalent to
showing that
\begin{equation}\label{eq:verification_system}
\begin{cases}
\partial_{11}U_1(x_1,x_2) +\partial_{22}U_1(x_1,x_2)+\partial_1F(x_1,x_2)=0, \ \ (x_1,x_2)\in\Omega^+,\\\\
\partial_{11}U_2(x_1,x_2)+\partial_{22}U_2(x_1,x_2)-\partial_2F(x_1,x_2)=0,\ \ (x_1,x_2)\in\Omega^+, \\\\
-\partial_1U_1(x_1,x_2)+\partial_2U_2(x_1,x_2)+cF(x_1,x_2)=0,\ \ (x_1,x_2)\in\Omega^+,\\\\
\partial_{11}F(x_1,x_2)+\partial_{22}F(x_1,x_2)=0, \ \ (x_1,x_2)\in\Omega^+.
\end{cases}
\end{equation}
For convenience, set
$$
A:=\frac{\lambda+3\mu}{\lambda+\mu},
\ \
B:=\frac{2\mu}{\lambda+\mu},
\ \
C:=\frac{\mu(\lambda+2\mu)}{(\lambda+\mu)^2}.
$$
Using \eqref{eq:tilde_f_formula}, \eqref{eq:tilde_u1_formula} and \eqref{eq:tilde_u2_formula}, we have

\begin{align*}
F(x_1,x_2)
&=
\frac{2\lambda}{\lambda+\mu}f(0,x_2)
+\frac{\mu-\lambda}{\lambda+\mu}f(x_1,x_2)
+4\partial_1u_1(x_1,x_2)\\
&\ \
-4\partial_1u_1(0,x_2)
-2x_1\partial_1f(x_1,x_2), \ \ (x_1,x_2) \in \O^+.
\end{align*}

\begin{align*}
U_1(x_1,x_2)
&=
Au_1(x_1,x_2)
-Bu_1(0,x_2)
+C\int_0^{x_1}f(s,x_2)ds
\\
&\ \
-2x_1\partial_1u_1(x_1,x_2)
-Bx_1f(x_1,x_2)
+x_1^2\partial_1f(x_1,x_2), \ \ (x_1,x_2) \in \O^+.
\end{align*}
and
$$
\begin{aligned}
U_2(x_1,x_2)
&=
u_2(x_1,x_2)
+B\int_0^{x_1}\partial_2u_1(s,x_2)\,ds
-Bx_1\partial_2u_1(0,x_2)
+2x_1\partial_2u_1(x_1,x_2)
\\
&\ \
+C\int_0^{x_1}(x_1-t)\partial_2f(t,x_2)\,dt
-x_1^2\partial_2f, \ \ (x_1,x_2) \in \O^+.
\end{aligned}
$$
We first verify the first identity in
\eqref{eq:verification_system}, that is
$$
\partial_{11}U_1(x_1,x_2)+\partial_{22}U_1(x_1,x_2)+\partial_1F(x_1,x_2)=0, \ \ (x_1,x_2)\in \Omega^+.
$$
Using $\Delta u_1(x_1,x_2)=\partial_1f(x_1,x_2)$ and $\Delta f(x_1,x_2)=0$, $(x_1,x_2)\in \Omega^+$, we compute term by term:
$$
\Delta(u_1(x_1,x_2))=\partial_1f(x_1,x_2),
$$
$$
\Delta(u_1(0,x_2))=\partial_{22}u_1(0,x_2),
$$
and
$$
\Delta\left(\int_0^{x_1}f(s,x_2)\,ds\right)
=
\partial_1f(0,x_2).
$$
Furthermore,
$$
\Delta(2x_1\partial_1u_1(x_1,x_2))
=
4\partial_{11}u_1(x_1,x_2)+2x_1\partial_{11}f(x_1,x_2),
$$

$$
\Delta(x_1f(x_1,x_2))
=
2\partial_1f(x_1,x_2),
$$
and
$$
\Delta(x_1^2\partial_1f(x_1,x_2))
=
2\partial_1f(x_1,x_2)+4x_1\partial_{11}f(x_1,x_2).
$$
Also,
$$
\partial_1F(x_1,x_2)
=
\frac{\mu-\lambda}{\lambda+\mu}\partial_1f(x_1,x_2)
+4\partial_{11}u_1(x_1,x_2)
-2\partial_1f(x_1,x_2)
-2x_1\partial_{11}f(x_1,x_2).
$$
Adding these identities gives
$$
\partial_{11}U_1(x_1,x_2)+\partial_{22}U_1(x_1,x_2)+\partial_1F(x_1,x_2)
=
-B\partial_{22}u_1(0,x_2)
+
C\partial_1f(0,x_2),
$$
because all the terms containing
$$
\partial_1f(x_1,x_2),\ \ \
\partial_{11}u_1(x_1,x_2),\ \ \
x_1\partial_{11}f(x_1,x_2)
$$
cancel. Therefore,
\begin{equation}\label{eq:verify}
\partial_{11}U_1(x_1,x_2)+\partial_{22}U_1(x_1,x_2)+\partial_1F(x_1,x_2)
=
-\frac{2\mu}{\lambda+\mu}\partial_{22}u_1(0,x_2)
+
\frac{\mu(\lambda+2\mu)}{(\lambda+\mu)^2}
\partial_1f(0,x_2).
\end{equation}
We first see some boundary identities which will be used in the verification below. Recall from \eqref{partial u_1} that
\begin{equation*}\label{eq:bd_id_1}
\partial_1u_1(0,x_2)
=
\frac{\lambda}{2(\lambda+\mu)}f(0,x_2).
\end{equation*}
Differentiating this identity tangentially with respect to $x_2$, we obtain
\begin{equation}\label{eq:bd_id_2}
\partial_{12}u_1(0,x_2)
=
\frac{\lambda}{2(\lambda+\mu)}\partial_2f(0,x_2),
\end{equation}
and differentiating again with respect to $x_2$,
\begin{equation}\label{eq:bd_id_3}
\partial_{122}u_1(0,x_2)
=
\frac{\lambda}{2(\lambda+\mu)}\partial_{22}f(0,x_2).
\end{equation}
Moreover, from \eqref{del11}, we have 
\begin{equation}\label{eq:bd_id_4}
\partial_{11}u_1(0,x_2)
=
\frac{\lambda}{2(\lambda+\mu)}\partial_1f(0,x_2).
\end{equation}
Since $\Delta u_1(x_1,x_2)=\partial_1f(x_1,x_2)$ in $\Omega^+$, evaluating on $\gamma$ gives
$$
\partial_{11}u_1(0,x_2)+\partial_{22}u_1(0,x_2)
=
\partial_1f(0,x_2).
$$
Combining this with \eqref{eq:bd_id_4}, we obtain
\begin{equation}\label{eq:bd_id_5}
\partial_{22}u_1(0,x_2)
=
\frac{\lambda+2\mu}{2(\lambda+\mu)}
\partial_1f(0,x_2).
\end{equation}
Differentiating \eqref{eq:bd_id_4} tangentially with respect to $x_2$, we get
$$
\partial_{112}u_1(0,x_2)
=
\frac{\lambda}{2(\lambda+\mu)}
\partial_{12}f(0,x_2).
$$
Also, differentiating $\Delta u_1(x_1,x_2)=\partial_1f(x_1,x_2)$ with respect to $x_2$ gives
$$
\partial_{112}u_1(x_1,x_2)+\partial_{222}u_1(x_1,x_2)=\partial_{12}f(x_1,x_2).
$$
Hence,
\begin{equation}\label{eq:bd_id_6}
\partial_{222}u_1(0,x_2)
=
\frac{\lambda+2\mu}{2(\lambda+\mu)}
\partial_{12}f(0,x_2).
\end{equation}
Now we can use \eqref{eq:bd_id_5} into \eqref{eq:verify}, the right-hand side becomes zero. Hence,
\begin{equation}\label{eq:verify_u1}
\partial_{11}U_1(x_1,x_2)+\partial_{22}U_1(x_1,x_2)+\partial_1F(x_1,x_2)=0, \ \ (x_1,x_2)\in \Omega^+
\end{equation}
Next, we verify that $F$ is harmonic in $\Omega^+$. We compute
$$
\partial_{11}F(x_1,x_2)
=
\frac{\mu-\lambda}{\lambda+\mu}\partial_{11}f(x_1,x_2)
+4\partial_{111}u_1(x_1,x_2)
-4\partial_{11}f(x_1,x_2)
-2x_1\partial_{111}f(x_1,x_2),
$$
and
\begin{align*}
\partial_{22}F(x_1,x_2)
&=
\frac{2\lambda}{\lambda+\mu}\partial_{22}f(0,x_2)
+\frac{\mu-\lambda}{\lambda+\mu}\partial_{22}f(x_1,x_2)
+4\partial_{122}u_1(x_1,x_2)
\\ &\ \ -4\partial_{122}u_1(0,x_2)
-2x_1\partial_{122}f(x_1,x_2).
\end{align*}
Adding these two identities and using $\Delta f(x_1,x_2)=0, \ \ (x_1,x_2) \in \Omega^+$ and
$$
\partial_{111}u_1(x_1,x_2)+\partial_{122}u_1(x_1,x_2)
=
\partial_1(\Delta u_1(x_1,x_2))
=
\partial_{11}f(x_1,x_2),
$$
we obtain
$$
\Delta F(x_1,x_2)
=
\frac{2\lambda}{\lambda+\mu}\partial_{22}f(0,x_2)
-4\partial_{122}u_1(0,x_2).
$$
By \eqref{eq:bd_id_3},
$$
\partial_{122}u_1(0,x_2)
=
\frac{\lambda}{2(\lambda+\mu)}
\partial_{22}f(0,x_2),
$$
Therefore
$$
\Delta F(x_1,x_2)=0, \ \ (x_1,x_2)\in \Omega^+,
$$
Thus,
\begin{equation}\label{eq:verify_f}
\partial_{11}F(x_1,x_2)+\partial_{22}F(x_1,x_2)=0, \ \ (x_1,x_2)\in \Omega^+.
\end{equation}
Next, we verify
$$
\partial_{11}U_2(x_1,x_2)+\partial_{22}U_2(x_1,x_2)-\partial_2F(x_1,x_2)=0.
$$
Using $\Delta u_2(x_1,x_2)=\partial_2f(x_1,x_2)$, the contribution from the first term of $U_2(x_1,x_2)$ is
$$
\Delta u_2(x_1,x_2)=\partial_2f(x_1,x_2).
$$
For the integral term, we compute
$$
\Delta\left(B\int_0^{x_1}\partial_2u_1(s,x_2)\,ds\right)
=
B\left[
\partial_2f(x_1,x_2)-\partial_2f(0,x_2)+\partial_{12}u_1(0,x_2)
\right].
$$
Also,
$$
\Delta\left(-Bx_1\partial_2u_1(0,x_2)\right)
=
-Bx_1\partial_{222}u_1(0,x_2),
$$
and
$$
\Delta(2x_1\partial_2u_1(x_1,x_2))
=
4\partial_{12}u_1(x_1,x_2)+2x_1\partial_{12}f(x_1,x_2).
$$
Furthermore,
$$
\Delta\left(
C\int_0^{x_1}(x_1-t)\partial_2f(t,x_2)\,dt
\right)
=
C\partial_2f(0,x_2)+Cx_1\partial_{12}f(0,x_2),
$$
and
$$
\Delta(-x_1^2\partial_2f(x_1,x_2))
=
-2\partial_2f(x_1,x_2)-4x_1\partial_{12}f(x_1,x_2).
$$
Consequently,
\begin{align*}
\Delta U_2(x_1,x_2)
&=
\partial_2f(x_1,x_2)
+B\left[
\partial_2f(x_1,x_2)-\partial_2f(0,x_2)+\partial_{12}u_1(0,x_2)
\right]
-Bx_1\partial_{222}u_1(0,x_2)
\\
&\ \
+4\partial_{12}u_1(x_1,x_2)+2x_1\partial_{12}f(x_1,x_2)
+C\partial_2f(0,x_2)+Cx_1\partial_{12}f(0,x_2)\\& \ \ 
-2\partial_2f(x_1,x_2)-4x_1\partial_{12}f(x_1,x_2).
\end{align*}
On the other hand,
$$
\partial_2F(x_1,x_2)
=
\frac{2\lambda}{\lambda+\mu}\partial_2f(0,x_2)
+\frac{\mu-\lambda}{\lambda+\mu}\partial_2f(x_1,x_2)
+4\partial_{12}u_1(x_1,x_2)
-4\partial_{12}u_1(0,x_2)
-2x_1\partial_{12}f(x_1,x_2).
$$
Subtracting, all interior terms cancel, and we obtain
\begin{align*}
\Delta U_2(x_1,x_2)-\partial_2F(x_1,x_2)
&=
\left(B+4\right)\partial_{12}u_1(0,x_2)
+
\left(-B+C-\frac{2\lambda}{\lambda+\mu}\right)
\partial_2f(0,x_2)
\\
&\quad
+x_1\left[
-B\partial_{222}u_1(0,x_2)
+C\partial_{12}f(0,x_2)
\right].
\end{align*}
Substituting the values of $B$ and $C$, this becomes
\begin{align*}
\Delta U_2(x_1,x_2)-\partial_2F(x_1,x_2)
&=
\frac{2(2\lambda+3\mu)}{\lambda+\mu}
\partial_{12}u_1(0,x_2)
-\frac{\lambda(2\lambda+3\mu)}{(\lambda+\mu)^2}
\partial_2f(0,x_2)
\\
&\quad
+x_1\left[
-\frac{2\mu}{\lambda+\mu}\partial_{222}u_1(0,x_2)
+
\frac{\mu(\lambda+2\mu)}{(\lambda+\mu)^2}
\partial_{12}f(0,x_2)
\right].
\end{align*}
Using \eqref{eq:bd_id_2}, the first line vanishes. Using
\eqref{eq:bd_id_6}, the second line also vanishes. Therefore
\begin{equation}\label{eq:verify_u2}
\partial_{11}U_2(x_1,x_2)+\partial_{22}U_2(x_1,x_2)-\partial_2F(x_1,x_2)=0.
\end{equation}

Finally, we verify the divergence identity
$$
-\partial_1U_1(x_1,x_2)+\partial_2U_2(x_1,x_2)+cF(x_1,x_2)=0.
$$
First,
\begin{align*}
\partial_1U_1(x_1,x_2)
&=
A\partial_1u_1(x_1,x_2)
+Cf(x_1,x_2)
-2\partial_1u_1(x_1,x_2)
-2x_1\partial_{11}u_1(x_1,x_2)\\
&\ \
-Bf(x_1,x_2)
-Bx_1\partial_1f(x_1,x_2)
+2x_1\partial_1f(x_1,x_2)
+x_1^2\partial_{11}f(x_1,x_2).
\end{align*}
Thus
\begin{align*}
-\partial_1U_1(x_1,x_2)
&=
-A\partial_1u_1(x_1,x_2)
-Cf(x_1,x_2)
+2\partial_1u_1(x_1,x_2)
+2x_1\partial_{11}u_1(x_1,x_2)
+Bf(x_1,x_2) \\
&\ \
+Bx_1\partial_1f(x_1,x_2)
-2x_1\partial_1f(x_1,x_2)
-x_1^2\partial_{11}f(x_1,x_2).
\end{align*}
Next,
\begin{align*}
\partial_2U_2(x_1,x_2)
&=
\partial_2u_2(x_1,x_2)
+B\int_0^{x_1}\partial_{22}u_1(s,x_2)\,ds
-Bx_1\partial_{22}u_1(0,x_2)
+2x_1\partial_{22}u_1(x_1,x_2)
\\
&\quad
+C\int_0^{x_1}(x_1-t)\partial_{22}f(t,x_2)\,dt
-x_1^2\partial_{22}f.
\end{align*}
Using
\begin{align*}
\partial_2u_2(x_1,x_2)&=-cf-\partial_1u_1(x_1,x_2),
\\
\partial_{22}u_1(x_1,x_2)&=\partial_1f(x_1,x_2)-\partial_{11}u_1(x_1,x_2),
\\
\partial_{22}f(x_1,x_2)&=-\partial_{11}f(x_1,x_2),
\end{align*}
we obtain
$$
\int_0^{x_1}\partial_{22}u_1(s,x_2)\,ds
=
f(x_1,x_2)-f(0,x_2)
-\partial_1u_1(x_1,x_2)
+\partial_1u_1(0,x_2),
$$
and
$$
\int_0^{x_1}(x_1-t)\partial_{22}f(t,x_2)\,dt
=
-f(x_1,x_2)+f(0,x_2)+x_1\partial_1f(0,x_2).
$$
Therefore
\begin{align*}
\partial_2U_2(x_1,x_2)
&=
-cf-\partial_1u_1(x_1,x_2)
+B\left[
f(x_1,x_2)-f(0,x_2)-\partial_1u_1(x_1,x_2)+\partial_1u_1(0,x_2)
\right]
\\
&\quad
-Bx_1\partial_{22}u_1(0,x_2)
+2x_1\partial_1f(x_1,x_2)
-2x_1\partial_{11}u_1(x_1,x_2)
\\
&\quad
+C\left[
-f(x_1,x_2)+f(0,x_2)+x_1\partial_1f(0,x_2)
\right]
+x_1^2\partial_{11}f(x_1,x_2).
\end{align*}
Also,
$$
cF(x_1,x_2)=
\frac{2\lambda c}{\lambda+\mu}f(0,x_2)
+
\frac{c(\mu-\lambda)}{\lambda+\mu}f(x_1,x_2)
+
4c\partial_1u_1(x_1,x_2)
-4c\partial_1u_1(0,x_2)
-2cx_1\partial_1f(x_1,x_2).
$$
Now adding $-\partial_1U_1(x_1,x_2)+\partial_2U_2(x_1,x_2)+cF(x_1,x_2)$, all interior terms cancel. Indeed, the coefficient of
$\partial_1u_1(x_1,x_2)$ is
$$
-A+2-1-B+4c=0;
$$
the coefficient of $f$ is
$$
-2C+2B-c+\frac{c(\mu-\lambda)}{\lambda+\mu}=0;
$$
the coefficient of $x_1\partial_1f(x_1,x_2)$ is
$$
B-2c=0;
$$
and the terms containing $x_1\partial_{11}u_1(x_1,x_2)$ and
$x_1^2\partial_{11}f(x_1,x_2)$ cancel directly. Therefore only boundary terms remain:
\begin{align*}
-\partial_1U_1(x_1,x_2)+\partial_2U_2(x_1,x_2)+cF(x_1,x_2)
&=
\frac{\lambda\mu}{(\lambda+\mu)^2}f(0,x_2)
-\frac{2\mu}{\lambda+\mu}\partial_1u_1(0,x_2)
\\
&\quad
+x_1\left[
-\frac{2\mu}{\lambda+\mu}\partial_{22}u_1(0,x_2)
+
\frac{\mu(\lambda+2\mu)}{(\lambda+\mu)^2}
\partial_1f(0,x_2)
\right].
\end{align*}
Using \eqref{eq:bd_id_1} and \eqref{eq:bd_id_5}, the right-hand side vanishes. Hence
\begin{equation}\label{eq:verify_div}
-\partial_1U_1(x_1,x_2)+\partial_2U_2(x_1,x_2)+cF(x_1,x_2)=0.
\end{equation}
Combining \eqref{eq:verify_u1}, \eqref{eq:verify_f}, \eqref{eq:verify_u2}
and \eqref{eq:verify_div}, we have proved \eqref{eq:verification_system}. Hence
$$
\Delta \widetilde u(x_1,x_2)=\nabla \widetilde f(x_1,x_2),\qquad
\nabla\cdot \widetilde u(x_1,x_2)=-c\widetilde f(x_1,x_2),\qquad
\Delta \widetilde f(x_1,x_2)=0
\quad\text{in }\Omega^-.
$$
On the other hand, since $\widetilde u(x_1,x_2)=u(x_1,x_2)$ and $\widetilde f(x_1,x_2)=f(x_1,x_2)$ in $\Omega^+\cup\gamma$, the pair $(\widetilde u(x_1,x_2),\widetilde f(x_1,x_2))$ gives the desired analytic continuation of $(u,f)$ across $\gamma$. Consequently,
$$
\mu\Delta \widetilde u(x_1,x_2)+(\lambda+\mu)\nabla(\nabla\cdot \widetilde u(x_1,x_2))
=
\mu\nabla \widetilde f(x_1,x_2)-(\lambda+\mu)c\nabla \widetilde f(x_1,x_2)=0,
$$
because $c=\mu/(\lambda+\mu)$. Thus $\widetilde u(x_1,x_2)$ satisfies the Lam\'e system in $\Omega$. This completes the proof.
\end{proof}
\subsection{Reflection principle for Navier equation}
In this subsection, we derive the reflection principle for the Navier equation, which is based on the  corresponding reflection principle for the Lamé system established in the previous subsection.

   The construction follows from the fundamental solution and the corresponding Green's tensor.
   We recall that the fundamental solution $\Phi (x,y)$ for the Lamé operator \eqref{Lame oeprator} in $\mathbb{R}^2$ is given by (see {\cite[chapter-2.2]{hsiao2021boundary}})
\begin{equation}
\label{eq:fundamental}
\Phi(x,y)
=
\frac{1}{4\pi}
\left\{
-\frac{3\mu+\lambda}{\mu(\lambda+2\mu)} \ln |x-y| \, I_2
+
\frac{\lambda+\mu}{\mu(\lambda+2\mu)}
\frac{(x-y)\otimes(x-y)}{|x-y|^2}
\right\},
\  \   x \neq y,
\end{equation}
where $I_2$ denotes the identity matrix of order $2$ and the $ij$th entries of the matrix obtained after taking the tensor product of vectors $x=(x_1,x_2), y=(y_1,y_2)$ in $\mathbb{R}^2$ are given by  
$
(x \otimes y)_{ij} := x_i y_j\ \text{for} \  i,j = 1,2.
$
 Let $G(x,y)$ be the Green's tensor for the Lam\'e system \eqref{Lame oeprator}
in the half-plane $\mathbb{R}^2_{+} := \{x : x_1>0 \}$, which satisfies the traction-free boundary condition
\eqref{eq:traction_recall} on $\Lambda:=\{x_1=0\}$. Based on a result deduced in {{\cite[Corollary 2.2]{elschner2019uniqueness}}}, the Green's tensor $G(x,y)$  to the Lam\'e system in the half-plane $\mathbb{R}^2_{+}$ with traction-free boundary condition  can be expressed in terms of $\Phi(x,y)$ as
\begin{equation}
\label{eq:green}
G(x,y)
=
\Phi(x,y) + \mathcal{B}_x \Phi(R_\Lambda x,y),
\  \   x,y \in \mathbb{R}^2_{+}, \  x \neq y,
\end{equation}
where $R_
\Lambda$ denotes the reflection operator across $\Lambda$, and $\mathcal{B}_x$ is the extension operator acting with respect to the variable $x$ and it is same as given in \eqref{eqn:Extension}.

\medskip

 We now state the reflection principle for the Navier equation as follows.
\begin{theorem}
\label{thm:reflection nav}
Let $\gamma \subset \Lambda$ and $\Omega =\Omega^+ \cup \gamma \cup \Omega^-$ be a symmetric domain as defined before. Suppose that $u(x_1,x_2)=(u_1(x_1,x_2),u_2(x_1,x_2))$ is a solution to the following Navier equation satisfying the traction-free boundary condition 
\begin{equation}
\label{eq:3.1}
\begin{cases}
\mathcal{N}_\omega u := \mathcal{L}u+\omega^2 u=0\ \text{in }\Omega^+ ,\\
 T_{\nu}(u) := 2\mu\, \partial_\nu u 
+ \lambda\,(\nabla \cdot u)\nu
+ \mu\,(\nabla \times u)\, \nu^{\perp} = 0\ \text{on } \gamma.
\end{cases}
\end{equation}
Then $u$ can be analytically extended  to $\Omega^-$ as a solution to \eqref{eq:3.1}. Moreover, there exists an extension operator $\mathcal{\widetilde B}$  such that
$$
u(-x_1, x_2):= ( \mathcal{\widetilde B} u)(x_1, x_2), \  (x_1,x_2) \in \Omega^+, 
$$
where we refer to \eqref{eq:3.8} for a  definition of the operator $\mathcal{\widetilde B}$. 
\end{theorem}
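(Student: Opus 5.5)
Having set up the half-plane traction-free Green's tensor $G$ in \eqref{eq:green}, we would reduce the Navier case to the Lam\'e case through a Lippmann--Schwinger-type integral equation. The lower-order term $\omega^2u$ is treated as a source, so that $\mathcal{L}u=-\omega^2u$ in $\Omega^+$ with $T_\nu u=0$ on $\gamma$. The steps, in order:

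\textbf{Step 1 (localize).} Fix $x^0\in\gamma$ and a small disc $B=B_r(x^0)\subset\Omega$, symmetric with respect to $\Lambda$; set $B^\pm=B\cap\Omega^\pm$. Choose a cutoff $\chi\in C_0^\infty(B)$ with $\chi\equiv1$ near $x^0$. Green's formula for $\mathcal{L}$ in $B^+$ with the kernel $G(x,\cdot)$ gives a representation of $u$ in $B^+$. Since both $u$ and $G$ are traction-free on $\gamma$, the Betti boundary terms on $\gamma$ drop out. The result is
\begin{equation*}
u(x)=\omega^2\int_{B^+}G(x,y)\,u(y)\,dy+w(x),\qquad x\in B^+_{r/2},
\end{equation*}
where $w$ collects the boundary integrals over $\partial B\cap\overline{\Omega^+}$. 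These involve $y$ away from $x^0$, so $w$ solves the homogeneous Lam\'e system near $x^0$ and is traction-free on $\gamma$. By Theorem~\ref{thm:reflection}, $w$ extends analytically as $\mathcal{B}w\circ R_\Lambda$.

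\textbf{Step 2 (extend the kernel).} By \eqref{eq:green} and Theorem~\ref{thm:reflection} applied in $x$, the map $x\mapsto G(x,y)$ extends across $\gamma$ for each fixed $y\in B^+$. Concretely, $G(x,y)=\Phi(x,y)+\mathcal{B}_x\Phi(R_\Lambda x,y)$, and for $x\in B^-$ this continuation equals $\mathcal{B}_x[\Phi(\cdot,y)+\mathcal{B}\Phi(R_\Lambda\cdot,y)](R_\Lambda x)$. It solves $\mathcal{L}_x=0$ except at $x=y$ and at $x=R_\Lambda y$, where a mirror singularity appears. We would therefore define the extension operator by
\begin{equation*}
(\widetilde{\mathcal{B}}u)(x_1,x_2):=\omega^2\int_{B^+}\widetilde G(R_\Lambda x,y)\,u(y)\,dy+(\mathcal{B}w)(x_1,x_2),
\end{equation*}
with $\widetilde G$ the continued kernel; this is the operator referred to in \eqref{eq:3.8}. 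Equivalently, $\widetilde{\mathcal B}u=\mathcal{B}u+\omega^2\mathcal{K}u$, where $\mathcal{K}$ is an integral correction.

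\textbf{Step 3 (the equation in $B^-$).} The candidate $\widetilde u$ must satisfy $\mathcal{L}\widetilde u=-\omega^2\widetilde u$ in $B^-$. This does not follow from the formula above, because the volume source there is the unknown $\widetilde u$ itself. The remedy is to pose the fixed-point equation on the full disc,
\begin{equation*}
U=\omega^2\int_{B}G_B(\cdot,y)U(y)\,dy+W,
\end{equation*}
where $G_B$ is the even/odd reflected kernel and $W$ is the reflected $w$. For $r$ small, the volume operator has norm $O(r^2|\log r|)$ in $L^\infty(B)$, so the equation is uniquely solvable by a Neumann series. One then checks that $U|_{B^+}=u$ by uniqueness in $B^+$, that $\mathcal{N}_\omega U=0$ in $B$, and that $U$ is analytic there by interior elliptic regularity of the constant-coefficient system. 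Finally, a covering of $\gamma$ by such discs, together with unique continuation, propagates the extension to all of $\Omega^-$. Symmetry of $\Omega$ and the segment condition guarantee that each point of $\Omega^-$ is reached.

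\textbf{Main obstacle.} The hardest step is to show that the reflected volume potential really satisfies $\mathcal{L}_x=-\omega^2\widetilde u$ in $B^-$, rather than producing a spurious source at the mirror points. Because $\mathcal{B}$ is non-local in $x_1$ (it contains integrals $\int_0^{x_1}$), the standard point-to-point commutation argument fails. We would first try to prove the identity
\begin{equation*}
\mathcal{L}(\mathcal{B}v\circ R_\Lambda)=(\mathcal{B}'(\mathcal{L}v))\circ R_\Lambda
\end{equation*}
for a suitable companion operator $\mathcal{B}'$, by repeating the verification of Step 6 of Theorem~\ref{thm:reflection} with a nonzero right-hand side. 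The aim is then to arrange the fixed-point equation so that the source in $B^-$ is exactly $\mathcal{B}'$ of the source in $B^+$.
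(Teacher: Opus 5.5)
There is a genuine gap, and it is the step you label the main obstacle: nothing in the proposal shows that the continued field satisfies $\mathcal{N}_\omega\widetilde u=0$ in $\Omega^-$, and Step~3 does not supply it. The kernel $G_B$ is never defined. It cannot be an even/odd reflection of $G$, because the traction-free Lam\'e system admits no point-to-point reflection; that is why $\mathcal{B}$ is non-local. This leaves two options, and both fail.

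\emph{If $G_B$ is a genuine fundamental solution on $B$ (say $\Phi$).} Then $\mathcal{N}_\omega U=0$ is automatic. But $U|_{B^+}=u$ no longer follows ``by uniqueness in $B^+$''. Restricted to $B^+$, the equation for $U$ still contains the volume term over $B^-$, so it is not the equation $u$ satisfies.

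\emph{If $G_B$ is built from the continued kernel $\widetilde G$.} Look at what \eqref{eq:2.5}--\eqref{eq:2.6} give for $G(\cdot,y)$. Near $x=y$, $f\sim\nabla_x\cdot\Phi(x,y)$ is homogeneous of degree $-1$. Hence the term $x_1^2\partial_1 f$ makes $\widetilde G(\cdot,y)$ singular like $|x-R_\Lambda y|^{-2}$ at the mirror point. The term $\int_0^{x_1}f(s,x_2)\,ds$ makes it jump across the horizontal half-line issuing from $R_\Lambda y$ away from $\Lambda$. Three consequences follow:
\begin{itemize}
\item $\widetilde G(\cdot,y)$ is not a single-valued Lam\'e solution on $B^-\setminus\{R_\Lambda y\}$, as Step~2 assumes.
\item The volume integral is not absolutely convergent, so the $O(r^2|\log r|)$ bound in $L^\infty(B)$ is lost.
\item $\mathcal{L}_x$ of the potential produces exactly the spurious sources you worry about.
\end{itemize}
Either way, Step~3 falls back on the commutation identity with a companion operator $\mathcal{B}'$. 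You only say you would try to prove it, and you do not show how the fixed-point equation could be arranged to match it.

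Even with Step~3 repaired, the localization cannot give the theorem as stated. The Neumann series forces $r$ to be small, so each disc extends $u$ only into $B^-$. Covering $\gamma$ then yields an extension only to a thin strip along $\gamma$, which should already follow from standard analytic regularity up to a flat boundary with zero traction data. Unique continuation gives uniqueness of a continuation, not its existence in the rest of $\Omega^-$. The segment condition on $\Omega$ matters only for a formula that reaches $(-x_1,x_2)$ along the whole horizontal segment from $(0,x_2)$, as \eqref{eq:2.5}--\eqref{eq:2.6} do. The global statement is what the paper uses later: reflection into entire sectors in Lemma~\ref{lem:2}, and into whole reflected polygons in Lemma~\ref{lem:3} and the path argument.

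The paper does not localize. It subtracts $\omega^2\int_{\Omega^+}G(\cdot,y)u(y)\,dy$ over all of $\Omega^+$ and applies Theorem~\ref{thm:reflection} to the resulting Lam\'e field $\widehat u$ on the whole symmetric domain. Substituting back gives \eqref{eq:3.8} on all of $\Omega^-$. For the Navier equation in $\Omega^-$, the paper then only says that it holds ``by construction''. So you have correctly located the real difficulty. A complete argument must still produce the extension on all of $\Omega^-$ at once and then verify $\mathcal{N}_\omega\widetilde u=0$ there, and your proposal does neither.
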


\begin{proof}
Let $u$ be a solution to \eqref{eq:3.1}. Define a new function
\begin{equation}
\label{eq:v_def}
\widehat{u}(x)
:=
u(x)
-
\omega^{2}
\int_{\Omega^{+}} G(x,y)\,u(y)\,dy,
\  \   x \in \Omega^{+},
\end{equation}
where $G(x,y)$ is the Green's tensor given by \eqref{eq:green}.
Now it is easy to check that $\widehat{u}$ satisfies the Lam\'e system with traction-free boundary condition, that is,
\begin{equation}
\label{eq:v_bvp}
\begin{cases}
\mathcal{L}\widehat{u}:=\mu \Delta \widehat{u} + (\lambda+\mu)\nabla(\nabla\cdot \widehat{u}) = 0
& \text{in }\, \Omega^{+}, \\
T_{\nu}(\widehat{u}):=2\mu\,\partial_\nu \widehat{u} + \lambda (\nabla\cdot \widehat{u})\,\nu
+ \mu (\nabla \times \widehat{u})\,\nu^{\perp} = 0
&  \text{on}\, \gamma.
\end{cases}
\end{equation}
Applying Theorem ~\ref{thm:reflection} and the definition of the operator $\mathcal{B}$ in \eqref{eqn:Extension}, we conclude that $\widehat{u}$ can be extended analytically to $\Omega^-$ as a solution to the Lam\'e system. After denoting the extended function again by $\widehat{u}$,
we obtain
\begin{equation}
\label{eq:v_extension}
\widehat{u}(-x_1,x_2) = (\mathcal{B}\widehat{u})(x_1,x_2),
\  x \in \Omega.
\end{equation}
Now substitute \eqref{eq:v_def} in  \eqref{eq:v_extension},
to deduce that  
\begin{align}
\label{eq:3.8}
u(-x_1,x_2)
&=
(\mathcal Bu)(x)
-
\omega^{2}
\int_{\Omega^{+}} \mathcal{B}_x G(x,y)\,u(y)\,dy
\nonumber \\
&\ 
+
\omega^{2}
\int_{\Omega^{+}} G((-x_1,x_2),y)\,u(y)\,dy
=: (\widetilde{\mathcal{B}}u)(x),
\  \   x \in \Omega^{+}.
\end{align}
The representation \eqref{eq:3.8} defines an extension of $u$ from $\Omega^+$ to $\Omega^-$. By construction, this extension satisfies the Navier equation in $\Omega^-$, and hence in the full domain $\Omega$. This finishes the proof of Theorem~\ref{thm:reflection nav}.
\end{proof}

\section{Application of Reflection principle}\label{sec:application}

\noindent In order to apply the reflection formula \eqref{eq:3.8} to the inverse obstacle scattering problem considered in this manuscript, we need to analytically extend to the whole space $\mathbb{R}^2$ the solution of the traction-free boundary value problem for the Navier equation in a sector. To describe it, we use polar coordinates \((r,\theta)\) in \(\mathbb{R}^2\) and for a given $\theta_0 \in (0,\tfrac{\pi}{2}]$, we define
\begin{equation}\label{eq:Aj_def}
 A_j := \left\{ (r,\theta)  :  r>0,\ \theta \in (-\tfrac{\pi}{2}+j\theta_0, -\tfrac{\pi}{2}+(j+1)\theta_0) \right\}
,
\end{equation}
and
\begin{equation}\label{eq:Lj_def}
\Lambda_j := \left\{ (r,\theta)  :  r>0,\ \theta = -\tfrac{\pi}{2}+ j\theta_0 \right\}, 
\  \   j\in \mathbb{N}\cup\{0\}. 
\end{equation}
Clearly, we have
\begin{equation*}\label{eq:boundary_Aj}
\partial A_j = \Lambda_j \cup \Lambda_{j+1}.
\end{equation*}
We assume that the normal direction $\nu$ at $\Lambda_j$ is directed into the sector $A_j$.
The geometry of the sector $A_0$ and of the domain $D$ is illustrated in Figure~\ref{fig:sector}.

\begin{figure}[H]
\centering
\begin{tikzpicture}[scale=1.0]
  \useasboundingbox (-2.0,-3.1) rectangle (2.4,2.4);
  \fill[gray!10] (0,0) -- (240:3) arc (240:420:3) -- cycle;
  \draw[dashed,thick] (0,0) -- (240:3);
  \draw[dashed,thick] (0,0) -- (420:3);
  \fill[blue!18] (0,0) -- (270:2.2) arc (270:330:2.2) -- cycle;
  \fill[orange!22] (0,0) -- (330:2.2) arc (330:390:2.2) -- cycle;
  \draw[thick] (0,0) -- (270:2.2);
  \draw[thick] (0,0) -- (330:2.2);
  \draw[thick] (0,0) -- (390:2.2);
  \fill (0,0) circle (1.6pt);
  \node[below left,xshift=1pt,yshift=8pt] at (0,0) {$P_0$};
  \node at (300:1.35) {$A_0$};
  \node at (0:1.5) {$A_1$};
  \node[left] at (270:2.55) {$\Lambda_0$};
  \node[below right] at (330:2.45) {$\Lambda_1$};
  \node[above right] at (390:2.45) {$\Lambda_2$};
  \node[left] at (252:2.6) {$D$};
\end{tikzpicture}
\caption{The domain $D$ (lightly shaded, with dashed boundary) contains the sector $A_0$ in its interior. The sector $A_1$ is the reflection of $A_0$ across the line $\Lambda_1$. The domain $D$ is unbounded and extends beyond the edges of the picture.}
\label{fig:sector}
\end{figure}

\begin{lemma} \label{lem:2}
Let \(D \subset \mathbb{R}^2\) be an unbounded domain that contains the sector \(A_0\). Assume that \(u\) satisfies the Navier equation \eqref{Equation for Incident field} in $D$, with the traction-free boundary condition \eqref{eq:8} on $\Lambda_0 \cup \Lambda_1$, that is,
\begin{equation}
\label{eq:aa}
\begin{cases}
\mathcal{N}_\omega(u) = 0\ \text{in } D, \\
T_\nu(u) = 0\ \text{on } \Lambda_0 \cup \Lambda_1.
\end{cases}
\end{equation}
Then $u$ admits an analytic extension to the whole space $\mathbb{R}^2$.

\begin{proof}
Let $u$ satisfy the boundary value problem \eqref{eq:aa}. Since $A_0 \subset D$, we may consider the restriction of $u$ to $A_0$.
By Theorem~\ref{thm:reflection nav}, $u$ admits an analytic extension across the boundary $\Lambda_1$ into the sector $A_1$, given by
\begin{equation*}
u(x) = (\widetilde{\mathcal{B}}_1 u)\big(R_{\Lambda_1} x\big), \  x \in A_1,
\end{equation*}
where $\widetilde{\mathcal{B}}_1$ is the extension operator associated with the traction-free boundary condition $T_\nu(u)=0$ on $\Lambda_1$. 
Introduce the function
\begin{equation*}
v(x):=R_{\Lambda_1}u(R_{\Lambda_1}x) =R_{\Lambda_1} (\widetilde{\mathcal{B}}_1 u)(x), \  x \in A_0 \cup \Lambda_1 \cup A_1.
\end{equation*}

Since the reflection $R_{\Lambda_1}$ maps $\Lambda_0$ onto $\Lambda_2$, and since both the Navier equation and the traction operator are invariant under reflection, it follows from Proposition \ref{pro} that the field $v$ obtained above satisfies the traction-free boundary condition on $\Lambda_2$.

\noindent Applying Theorem~\ref{thm:reflection nav} again, now across the side $\Lambda_2$, we obtain an analytic extension of $u$ from $A_1$ into $A_2$, and this extension satisfies the traction-free boundary condition on $\Lambda_3$. Repeating this argument inductively, we obtain an analytic extension of $u$ from $A_0$ into the successive sectors $A_j$ for $j\ge1$, with $T_\nu(u)=0$ on each of the rays $\Lambda_j$.%

Now, since $\theta_0 \in (0,\tfrac{\pi}{2}]$, there exists an integer $k \geq 2$ such that
$
k\theta_0 \geq \pi.
$
By iterating the above reflection-extension procedure $k-1$ times, $u$ can be analytically extended to the right half-plane $\{x_1 \geq 0\}$.
Moreover, the extended function satisfies the traction-free boundary condition
$$
T_\nu(u) = 0 \  \text{on } \{x_1 = 0\}.
$$
Applying Theorem \ref{thm:reflection nav} again across the line $\{x_1=0\}$, we further extend $u$ from $\mathbb{R}^2_+$ to the entire space $\mathbb{R}^2$. This completes the proof of the lemma.
\end{proof}
\end{lemma}
 We now turn to the case where the boundary is locally flat. In this setting, the extension argument relies on reflection across a straight line rather than corner-based analysis.
The following lemma provides the corresponding extension result for solutions to the Navier equation under traction-free boundary conditions.

To state the result precisely, we fix the following notation. Let $D \subset \mathbb{R}^2$ be a bounded polygonal domain and let $G \subset \mathbb{R}^2 \setminus \overline{D}$ denote an unbounded exterior domain. Consider a line segment $\gamma \subset \partial D \cap \partial G$, and let $\Lambda$ be the straight line containing $\gamma$. We denote by $R_{\Lambda}(D)$ the reflection of $D$ across $\Lambda$, and by $\widetilde B$ the extension operator associated with the traction-free boundary condition on $\gamma$ introduced in Theorem~\ref{thm:reflection nav}.
\begin{lemma} \label{lem:3}
Assume that \(u\) satisfies the Navier equation \eqref{Equation for Incident field} in $D \cup \gamma \cup G$, with the traction-free boundary condition \eqref{eq:8} on $\partial D$, that is,
\begin{equation}
\label{eq:abc}
\begin{cases}
\mathcal{N}_\omega(u) = 0 \  \text{in } D \cup \gamma \cup G, \\
T_\nu(u) = 0 \  \text{on } \partial D.
\end{cases}
\end{equation}
Then the following statement holds
\begin{enumerate}
    \item[(i)] The reflected field $w :=R_{\Lambda}\circ\mathcal{\widetilde B}u$ admits an analytic extension to $G \cup R_\Lambda(D)$ as a solution to the Navier equation with traction-free boundary condition  \eqref{eq:8}  on $\partial R_{\Lambda}(D)$. 
    \item[(ii)] Suppose that there exists a half-plane contained in $G$ whose boundary line extends one side of $\partial R_{\Lambda}(D)$. Then both $w$ and $u$ admit analytic continuation to the entire plane $\mathbb{R}^2$.
\end{enumerate}
\end{lemma}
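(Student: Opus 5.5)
Part (i) will follow from Theorem~\ref{thm:reflection nav} applied locally, after which the reflected field is propagated into $R_\Lambda(D)$ by unique continuation. First choose a small symmetric neighbourhood $\Omega$ of an interior point of $\gamma$ with $\Omega^+:=\Omega\cap G$ and $\Omega^-:=R_\Lambda(\Omega^+)$, shrinking it so that $\Omega^-\subset D$; this is possible because $\gamma\subset\partial D\cap\partial G$ is a straight segment. On $\Omega^+$, $u$ solves $\mathcal N_\omega u=0$ with $T_\nu u=0$ on $\gamma\cap\Omega$, so Theorem~\ref{thm:reflection nav} gives the local extension $\widetilde{\mathcal B}u$. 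Set $w=R_\Lambda\circ\widetilde{\mathcal B}u$, i.e.\ $w(x)=R_\Lambda(\widetilde{\mathcal B}u)(R_\Lambda x)$ in the notation of the proof of Lemma~\ref{lem:2}.

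Since the Lam\'e and Navier operators are invariant under orthogonal transformations, and the traction operator transforms covariantly when $\nu\mapsto R_\Lambda\nu$, $w$ solves the Navier equation on a one-sided neighbourhood of $\gamma$ in $G$. Moreover, $w$ is the image under $R_\Lambda$ of $u$ restricted to $D\cup\gamma\cup G$. Hence $w$ extends to all of $R_\Lambda(D)\cup R_\Lambda(\gamma)\cup R_\Lambda(G)$, which in particular covers $R_\Lambda(D)$. On each side of $\partial R_\Lambda(D)$ it satisfies $T_{R_\Lambda\nu}w=0$, because $T_\nu u=0$ on $\partial D$ (this is Proposition~\ref{pro}). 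Near $\gamma$ this field coincides with $u$ in $G$, by construction of the reflection. The union $G\cup R_\Lambda(D)$ is connected through the neighbourhood of $\gamma$, so $w$ and $u$ glue to a single analytic solution there. Unique continuation (analyticity of Navier solutions) then guarantees that the continuation is well defined on the connected set and does not depend on the path taken.

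Part (ii) reduces to Lemma~\ref{lem:2}, or more precisely to its last step. Suppose a half-plane $H\subset G$ has boundary line $\ell$ containing a side $\sigma$ of $\partial R_\Lambda(D)$. By (i), $w$ solves the Navier equation in $H$ up to $\sigma$ and satisfies $T_\nu w=0$ on $\sigma$. Two routes are available here. The first is to use real analyticity of $w$ and of $T_\nu w$ along $\ell$: since $T_\nu w$ vanishes on the open segment $\sigma$, and $w$ is analytic in a neighbourhood of every point of $\ell$ adjacent to $H$, the function $T_\nu w$ vanishes on the whole of $\ell$. The second is to reflect across $\sigma$ with Theorem~\ref{thm:reflection nav} and continue. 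In either case one obtains a solution in $H$ with traction-free data on all of $\ell=\partial H$. Applying Theorem~\ref{thm:reflection nav} with $\Omega=\mathbb{R}^2$ and $\Omega^+=H$ then extends $w$ to $\mathbb{R}^2$, exactly as in the final step of Lemma~\ref{lem:2}. Since $u=R_\Lambda\circ\widetilde{\mathcal B}^{-1}$-related to $w$ through the reflection across $\Lambda$, and both are entire solutions agreeing on an open set, $u$ also extends analytically to $\mathbb{R}^2$.

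The main obstacle is the claim that $T_\nu w=0$ on all of $\ell$ and not only on $\sigma$. This requires $w$ to be analytic up to $\ell$, whereas a priori $\ell$ may meet $\partial R_\Lambda(D)$ elsewhere. Because $H\subset G$, however, $\ell$ is approached from the open set $G$ where $w$ is analytic. So the plan is to work on $H$ itself and extend the identity $T_\nu w=0$ along $\ell$ by analyticity of $w$ on a slightly larger open set containing $H$ near $\ell$. At the corners where $\sigma$ ends I would fall back on Lemma~\ref{lem:2}.
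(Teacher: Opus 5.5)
\textbf{The gap is the gluing step in (i).} You assert that near $\gamma$ the field $w=R_\Lambda\circ u\circ R_\Lambda$ ``coincides with $u$ in $G$, by construction of the reflection.'' It does not.

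Proposition~\ref{pro} does show that $w$ solves $\mathcal N_\omega w=0$ on $R_\Lambda(D\cup\gamma\cup G)$ with $T_\nu w=0$ on $\partial R_\Lambda(D)$. But $w$ is the \emph{rigid} reflection of $u$. The analytic continuation of $u$ across $\gamma$ is the non-local map $y\mapsto(\widetilde{\mathcal B}u)(R_\Lambda y)$ from \eqref{eq:3.8}, and the two differ in general. Take $\Lambda=\{x_1=0\}$ and the entire field $u(x)=(\cos(k_px_1),\cos(k_sx_1))$:
\begin{itemize}
\item it solves $\mathcal N_\omega u=0$;
\item its traction $\big(-(\lambda+2\mu)k_p\sin(k_px_1),\,-\mu k_s\sin(k_sx_1)\big)$ vanishes on $\Lambda$;
\item its continuation across $\Lambda$ is $u$ itself;
\item yet $R_\Lambda u(R_\Lambda x)=(-\cos(k_px_1),\cos(k_sx_1))\neq\pm u(x)$.
\end{itemize}
So on $R_\Lambda(D)\cap G$ you have two different Navier fields, and unique continuation cannot glue them. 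Even a genuinely single-valued local gluing would say nothing about the other components of $R_\Lambda(D)\cap G$; that is what the curve $Z$ handles in the path argument.

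What (i) actually requires is that the true continuation $(\widetilde{\mathcal B}u)\circ R_\Lambda$ satisfies $T_\nu=0$ on $R_\Lambda(\partial D\setminus\gamma)$. Proposition~\ref{pro} concerns only the rigid reflection, and nothing in \eqref{eq:3.8} transports the boundary condition to the reflected sides. That transport is exactly what point-to-point reflections provide (Dirichlet/Neumann for Helmholtz, the third and fourth Navier problems), and it is what the traction-free reflection lacks. Your route is the same as the paper's: it writes $w=R_\Lambda\circ\widetilde{\mathcal B}u=R_\Lambda\circ u\circ R_\Lambda$, asserts that $w$ is well defined on $G\cup R_\Lambda(D)$, and invokes Proposition~\ref{pro}. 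So the paper does not supply this missing step either.

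\textbf{Part (ii) inherits the same problem.} Reflecting across $\ell=\partial H$ needs $w$ on $H\subset G$, but the rigid reflection is defined on $R_\Lambda(G)$, not on $G$. Your closing step (``$u$ and $w$ are entire solutions agreeing on an open set'') relies on the same false coincidence. The two readings of $w$ each fail:
\begin{itemize}
\item If $w$ is $R_\Lambda\circ u\circ R_\Lambda$ globally, entireness of $u$ follows at once from that of $w$, but then $w$ is not known on $H$.
\item If $w$ is the continuation of $u$, it is known on $H$, but $T_\nu w=0$ on $\sigma$ is not known.
\end{itemize}
Separately, propagating $T_\nu w=0$ from $\sigma$ to all of $\ell$ by analyticity needs $w$ to be real-analytic near every point of $\ell$. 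The hypothesis $H\subset G$ alone does not guarantee this, since points of $\ell\setminus\overline{\sigma}$ may lie on $\partial G$.
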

\begin{proof}
(i) Since $u$ satisfies the Navier equation in $D \cup \gamma \cup G$ and the traction-free boundary condition on $\partial D$, in particular we have
$$
T_\nu(u) = 0 \  \text{on } \gamma.
$$
Let $\Lambda$ be the straight line containing $\gamma$. Then, in a neighbourhood of $\gamma$, the domain $D \cup \gamma \cup G$ is locally symmetric with respect to $\Lambda$.
By the reflection principle for the Navier equation (Theorem~\ref{thm:reflection nav}), there exists an extension operator $\widetilde{B}$ such that the reflected field
$
\widetilde{B}u(R_\Lambda x)
$ 
defines an analytic continuation of $u$ across $\Lambda$, implying that $u(x)=\widetilde{B}u(R_\Lambda x)$. Hence, $w =R_{\Lambda}\circ\mathcal{\widetilde B}u=R_{\Lambda}\circ u(R_{\Lambda}\circ x)$ is well-defined in $G \cup R_\Lambda(D)$.
Applying Proposition \ref{pro}, one obtains that the extended field $w$ satisfies
$$
\mathcal{N}_\omega(w) = 0 \  \text{in } G \cup R_\Lambda(D),
$$
and inherits the traction-free boundary condition on the reflected boundary, that is,
$$
T_\nu(w) = 0 \  \text{on } \partial R_\Lambda(D).
$$
This proves part (i).

(ii) Assume now that there exists a half-plane $H \subset G$ whose boundary line coincides with an extension of one side of $\partial R_\Lambda(D)$.

From part (i), the function $w$ is already defined in $G \cup R_\Lambda(D)$ and satisfies the Navier equation together with the traction-free condition on $\partial R_\Lambda(D)$. In particular, along the straight line forming the boundary of the half-plane $H$, the traction-free condition holds.

\noindent We may therefore apply the reflection principle (Theorem~\ref{thm:reflection nav}) once again across this line. This yields an analytic continuation of $w$ from the half-plane $H$ to its reflected counterpart. Since $H$ is a half-plane, this reflection extends the solution across an entire straight line. Repeating this argument if necessary, we obtain an analytic continuation of $w$ to the whole space $\mathbb{R}^2$. Finally, since $w=R_{\Lambda}\circ u\circ R_{\Lambda}$, the same continuation applies to $u$. Hence both $u$ and $w$ admit analytic continuation to the entire plane $\mathbb{R}^2$.
This completes the proof.
\end{proof}

\begin{proof}[\bfseries Proof of Theorem~\ref{thm:main_result}]
 The proof proceeds by contradiction and relies on the reflection principle for the Navier equation together with the unique continuation property of elastic fields. 
 
\noindent Let $D_1, D_2 \subset \mathbb{R}^2$ be two bounded, connected polygonal obstacles. For each $j=1,2$, we denote by $u_j^\infty$, $u_j^{\mathrm{sc}}$, and $u_j$ the elastic far-field pattern, the scattered field, and the total field corresponding to $D_j$, respectively, generated by the fixed incident plane wave given by \eqref{eq:plane}.

\noindent Assume that $D_1 \neq D_2$, while the corresponding far--field patterns coincide, that is,
\begin{equation}
\label{eq:equal_farfield}
u_1^\infty(\widehat x) = u_2^\infty(\widehat x)\ \text{for all} \       \widehat x\in\mathbb{S}^1.
\end{equation}
Then, by Rellich's lemma (e.g., Lemma 5.8 in \cite{hahner1998scattering}),  the corresponding total fields agree in the unbounded component $E$ of $\mathbb{R}^2 \setminus \overline{D_1 \cup D_2}$, that is,
\begin{equation}
\label{eq:3.13}
u_1 = u_2
\  \text{ in } E
\end{equation}
Since $D_1 \neq D_2$, the sets $\partial D_1$ and $\partial D_2$ do not coincide. Hence, we can assume that $$ \partial D_2 \cap \partial E \cap D_1^{\,c} \neq \emptyset \  \text{or} \  \partial D_1 \cap \partial E \cap D_2^{\,c} \neq \emptyset. $$ Without loss of generality, we assume that $$ \partial D_2 \cap \partial E \cap D_1^{\,c} \neq \emptyset. $$
Thus, using the fact that \(\partial D_2\) consists of finitely many straight line segments together with their endpoints (corner points), we can choose a point 
$P_0 \in \partial D_2 \cap \partial E \cap D_1^{\,c}$ which is either a corner point of \(\partial D_2\) or lies in the interior of one of the edges of $\partial D_2$.
Hence, we consider the two different cases for the aforementioned possibilities of the point $P_0$.  
\begin{case}\label{case: corner point} 
$P_0$ is a corner point of $\partial D_2$. 
\end{case}
If  $P_0 \in \partial D_2 \cap \partial E \cap D_1^{\,c}$ is a corner point of $D_2$, then the  two adjacent edges of $\partial D_2$ meet at $P_0$. Extending these edges beyond $P_0$ into $D_1^{\,c}$, we obtain two half-lines, denoted by $\Lambda_0$ and $\Lambda_1$; see Figure~\ref{fig:case1}.
Since \(u_1 = u_2\) in \(E\) and  \(u_2\) satisfies the traction-free boundary condition on \(\partial D_2\), it follows that \(u_1\) also satisfies the following 
\begin{equation}
\label{eq:traction_case}
T_\nu(u_1)=2\mu\,\partial_\nu u_1
+ \lambda (\nabla \cdot u_1)\,\nu
+ \mu (\nabla \times u_1)\,\nu^{\perp}
= 0
\  \text{on } \Lambda_0 \cup \Lambda_1.
\end{equation}
 The choice of the normal direction on $\Lambda_0$ and $\Lambda_1$ is arbitrary,
because reversing the normal direction does not change the traction-free
boundary condition.

\begin{figure}[H]
\centering
\includegraphics[width=0.37\textwidth]{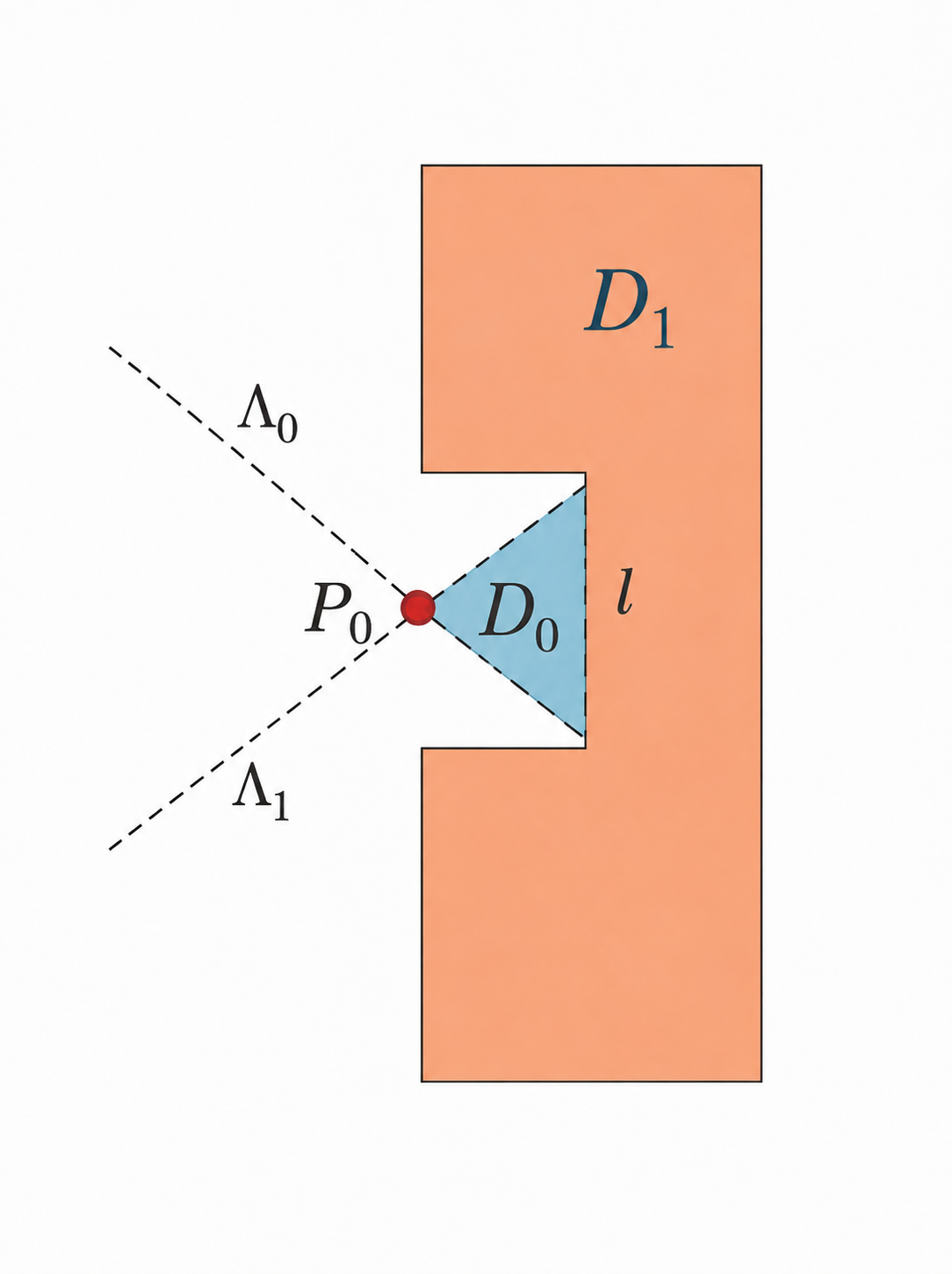}
\caption{Schematic of the polygonal obstacle $D_1$ and $D_2=D_1 \cup l \cup D_0$, where
$D_0$ represent the region separating $D_1$ from $D_2$. Near the corner point $P_0$, two edges of $D_2$ can be extended into half-lines lying in the exterior domain $D_1^{\,c}$.}
\label{fig:case1}
\end{figure}
\noindent By the translation and rotational invariance of the isotropic Navier equation, we may, without loss of generality, translate and rotate the coordinate system so that $P_0$ is located at the origin and that the two half-lines $\Lambda_0$ and $\Lambda_1$ takes the form given in \eqref{eq:Lj_def}.  
Let $A_0$ denote the infinite sector bounded by these two half-lines and lying in the exterior of $D_2$.
By the connectedness of $D_1$, the infinite sector $A_0$ must be completely contained in $E \cap D_1^{\,c}$, which implies that the total field
$$
u_1 = u^{\mathrm{in}} + u_1^{\mathrm{sc}}
$$
satisfies the Navier equation \eqref{Navier operator} in a neighboring area of the sector $A_0$.
Applying Lemma~\ref{lem:2}, we can extend $u_1$ to an entire solution in $\mathbb{R}^2$. It follows that the scattered field $u_1^{\mathrm{sc}}$ is an entire radiating solution. Hence, by the radiation condition and Rellich's lemma, $u_1^{sc} \equiv 0$. 
Consequently, the total field is
$
u_1(x) = u^{\mathrm{in}}(x) = c_p\, d\, e^{\mathrm{i}k_p x\cdot d}
+ c_s\, d^\perp\, e^{\mathrm{i}k_s x\cdot d} \ \text{in }D_1^c
$. Since $u_1$ satisfies the traction-free boundary condition on $\partial D_1$, it follows that

$$
T_\nu(u^{\mathrm{in}}(x)) = 0, \  \   x \in \partial D_1.
$$
Writing $u_p^{in} := de^{\mathrm{i}k_p x\cdot d}$ and $u_s^{in} := d^\perp\, e^{\mathrm{i}k_s x\cdot d}$, the linearity of the traction operator gives
$$
 c_pT_\nu(u_p^{\mathrm{in}}(x)) + c_sT_\nu(u_s^{\mathrm{in}}(x))=0, \  x\in \partial D_1.
$$
For the P-wave, since d is a constant unit vector, we start with computing
$$\nabla u_p^{\mathrm{in}}(x) = i k_p (d \otimes d) e^{i k_p x \cdot d},\  \  \
\partial_\nu u_p^{\mathrm{in}}(x) = i k_p  (d \cdot \nu)\, d\, e^{i k_p x \cdot d},$$
and $$
\nabla \cdot u_p^{\mathrm{in}}(x) = i k_p e^{i k_p x \cdot d},
\  \  \
\nabla \times u_p^{\mathrm{in}}(x) = 0.
$$
Hence,
\begin{equation}\label{eq:Ptraction}
T_\nu(u_p^{\mathrm{in}}(x)) =
2\mu i k_p  (d \cdot \nu)\, d\, e^{i k_p x \cdot d}
+ \lambda i k_p  \nu\, e^{i k_p x \cdot d}
\end{equation}
In a similar way, for the S-wave, 
we compute
$$
\nabla u_s^{\mathrm{in}}(x) = i k_s  (d \otimes d^\perp) e^{i k_s x \cdot d},
\  \  \  \partial_\nu u_s^{\mathrm{in}}(x) = i k_s (d \cdot \nu)\, d^\perp\, e^{i k_s x \cdot d},$$
and
$$
\nabla \cdot u_s^{\mathrm{in}}(x) = 0,
\  \  \ 
\nabla \times u_s^{\mathrm{in}}(x) = i k_s  e^{i k_s x \cdot d}.
$$
Therefore,
\begin{equation}\label{eq:Stracion}
T_\nu(u_s^{\mathrm{in}}(x)) =
2\mu i k_s  (d \cdot \nu)\, d^\perp\, e^{i k_s x \cdot d}
+ \mu i k_s  \nu^\perp\, e^{i k_s x \cdot d}.
\end{equation}
Combining \eqref{eq:Ptraction} and \eqref{eq:Stracion}, the traction-free
condition on $\partial D_1$ yields
\begin{align*}
T(u^{in}(x))
&=
2\mu i k_p c_p (d\cdot \nu)d\, e^{i k_p x\cdot d}
+ \lambda i k_p c_p \nu\, e^{i k_p x\cdot d} \\
&\ 
+ 2\mu i k_s c_s (d\cdot \nu)d^\perp\, e^{i k_s x\cdot d}
+ \mu i k_s c_s \nu^\perp\, e^{i k_s x\cdot d}= 0\ \mbox{on}\ \partial D_1, 
\end{align*}
which after combining various terms implies 
$$ 
i k_p c_p e^{i k_p x\cdot d}
\Big( 2\mu(d\cdot \nu)d + \lambda\nu \Big) + i k_s c_s e^{i k_s x\cdot d}
\Big( 2\mu(d\cdot \nu)d^\perp + \mu\nu^\perp \Big)=0\ \text{on} \ \ \partial D_1.$$
Set
$$
A_\nu:=2\mu(d\cdot\nu)d+\lambda\nu,
\ \ \ \
B_\nu:=2\mu(d\cdot\nu)d^\perp+\mu\nu^\perp .
$$
Then
$$
ik_p c_p e^{ik_p x\cdot d}A_\nu
+
ik_s c_s e^{ik_s x\cdot d}B_\nu=0 \ \ \text{on} \ \ \partial D_1.
$$
We now exploit the polygonal structure of $D_1$. Since $D_1$ is a bounded polygonal obstacle, its boundary $\partial D_1$ consists of finitely
many open line segments (edges). These edges cannot all be parallel to
the same direction. Therefore, there exists an edge
$\Gamma\subset\partial D_1$
whose unit tangent vector $\tau$ satisfies
$$
\tau\cdot d\neq0.
$$
Since $\Gamma$ is a straight line segment, both the unit tangent vector
$\tau$ and the outward unit normal vector $\nu$ are constant on $\Gamma$.
Let $L>0$ denote the length of $\Gamma$. We parameterize $\Gamma$ by $$
\Gamma=\{x_0+t\tau:\;0<t<L\},
$$ where $x_0 \in \Gamma$ is fixed. For any $x\in \Gamma$, we can write $x= x_0 + t\tau$ for some $t\in (0,L)$. Hence,
$$
x\cdot d=x_0\cdot d+t(\tau\cdot d).
$$
It follows that
$$
e^{ik_px\cdot d}
=
e^{ik_px_0\cdot d}e^{ik_p(\tau\cdot d)t}, 
\ \ and\ \
e^{ik_sx\cdot d}
=
e^{ik_sx_0\cdot d}e^{ik_s(\tau\cdot d)t} \ \  \text{for} \ \ x\in \Gamma
$$
Since $k_p\neq k_s$ and $\tau\cdot d\neq0$, these two functions are linearly
independent on the interval $(0,L)$, and consequently on $\Gamma$. Therefore,
$$
c_p A_\nu=0, \ \ \text{and}
\quad
c_s B_\nu=0
\ \ \text{on }\Gamma .$$
From here, we have the following possibilities. 
Either $c_p=0$ or $A_\nu=0$ on $\Gamma$ and either $c_s=0$ or $B_\nu=0$ on $\Gamma$. We start with the second possibility i.e.  either $c_s=0$ or $B_\nu=0$ on $\Gamma$.  Now if $c_s\neq 0$ then
$ B_\nu=0\ \text{on }\Gamma$ and hence $$
2\mu(d\cdot\nu)d^\perp+\mu\nu^\perp=0\ \text{on }\Gamma.
$$
Dividing by $\mu>0$, we obtain
$
2(d\cdot\nu)d^\perp+\nu^\perp=0\ \text{on }\Gamma
$.
Taking the dot product with $d^\perp$, we get
$$
2(d\cdot\nu)+\nu^\perp\cdot d^\perp=0\ \text{on }\Gamma. 
$$
Since
$
\nu^\perp\cdot d^\perp= \nu\cdot d\ \text{on }\Gamma
$
we obtain
$
(d\cdot\nu)=0\ \text{on }\Gamma
$.
Substituting this into
$$
2(d\cdot\nu)d^\perp+\nu^\perp=0\ \text{on }\Gamma,
$$
gives
$
\nu^\perp=0\ \text{on }\Gamma.
$
This is impossible because  $\nu^\perp$ is a unit vector and hence 
$
c_s=0
$. Now  using this in the traction free identity, we arrive at 
$$
ik_p c_p e^{ik_p x\cdot d}A_\nu=0\ \text{on }\partial D_1.
$$
Since $ik_p e^{ik_p x\cdot d}\neq0$, we have
$$
c_p A_\nu=0\ \text{on} \ \partial D_1.
$$
Now if  $c_p\neq0$, then 
$
A_\nu=0\ \text{on } \partial D_1,
$
that is,
$$
2\mu(d\cdot\nu)d+\lambda\nu=0\ \text{on }\partial D_1.
$$
Taking the dot product with $d$, we get that 
$
(2\mu+\lambda)(d\cdot\nu)=0\ \text{on }\partial D_1.
$
Since $\lambda+2\mu>0$, it follows that
$$
d\cdot\nu=0\ \text{on }\partial D_1.
$$
Thus, the outward normal to every edge of $D_1$ is perpendicular to $d$, which means that all edges of $D_1$ are parallel to $d$. This is impossible for the boundary of a bounded polygonal obstacle, because  a bounded polygon must have at least two non-parallel edge directions. Hence
$
c_p=0.
$
Combining this with $c_s=0$, we obtain
$$
c_p=c_s=0,
$$
which contradicts the assumption that 
$$
|c_p|+|c_s|\neq0.
$$
This contradiction proves that $D_1=D_2$.
\begin{case}\label{case: interior point}
    When $P_0$ is an interior point of $\partial D_2$.
\end{case}
Since, in this case, $P_0$ is not a corner of $D_2$, there exists a sufficiently small neighborhood $U$ of $P_0$ in $\mathbb{R}^2$ such that   $U\cap \partial D_2$ contains a  straight line segment. Consequently, we can choose a finite line segment $\gamma_0 \subset \partial D_2 \cap \partial E \cap D_1^c$ containing $P_0$, whose two endpoints lie on $\partial D_1 \cap \partial E$; see Figure~\ref{fig:case2}.

\begin{figure}[htbp]
    \centering
    \begin{minipage}{0.30\textwidth}
        \centering
        \includegraphics[width=\linewidth]{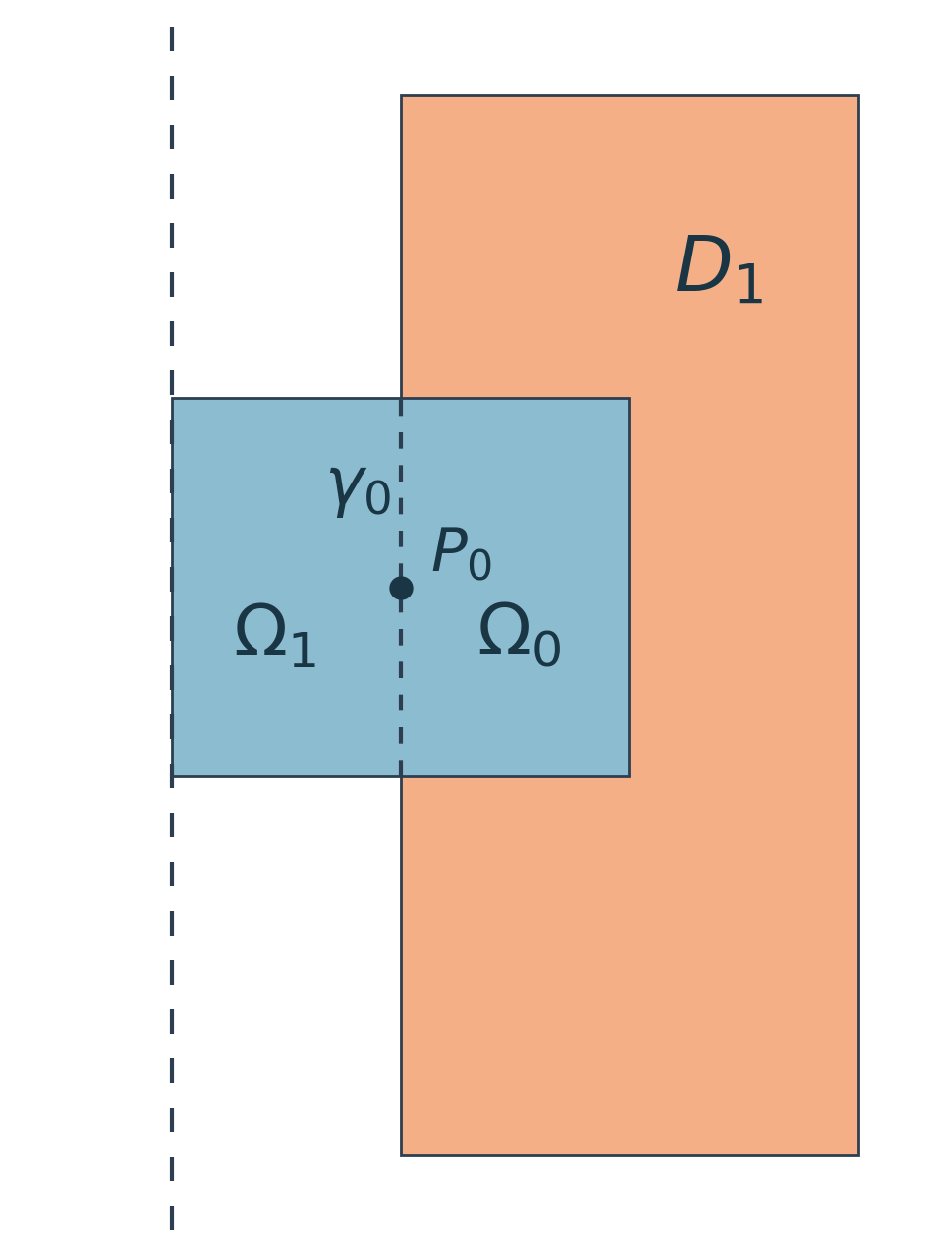}
    \end{minipage}
    \hspace{0.15\textwidth}
    \begin{minipage}{0.37\textwidth}
        \centering
        \includegraphics[width=\linewidth]{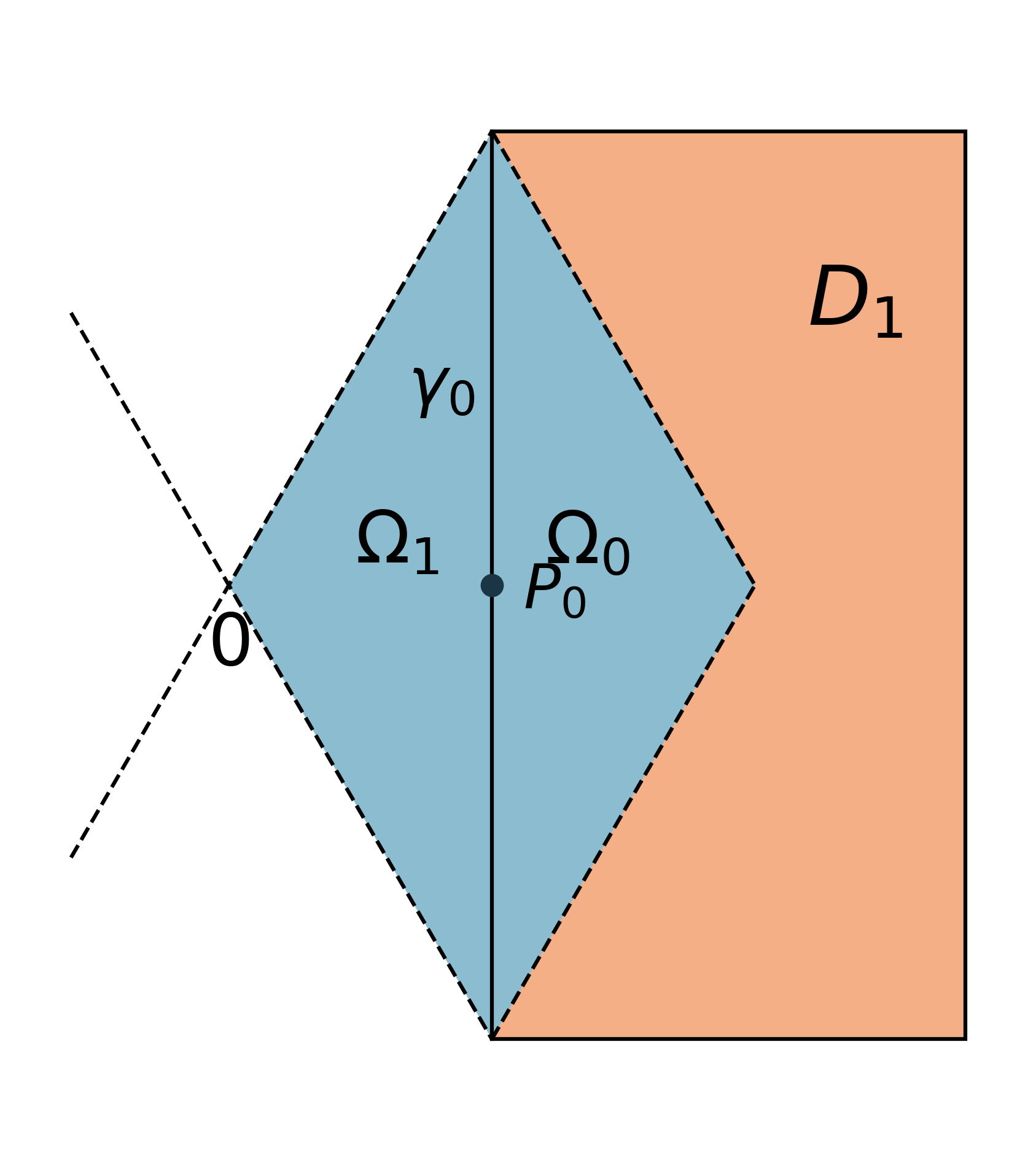}
    \end{minipage}
    
    \caption{Schematic of the polygonal obstacle $D_1$ and $D_2=D_1 \cup (\overline{\Omega_0}\setminus \gamma_0)$, where
    $\Omega_0$ denotes the region separating $D_1$ from $D_2$. The domain $\Omega_1$ is the reflection of $\Omega_0$ across $\gamma_0$. Left: one edge of $\partial \Omega_1$ extends to a straight line contained in $D_1^{c}.$ Right: two adjacent edges of $\partial \Omega_2$ extend to half-lines contained in $D_1^{c}$ starting from the corner $O$, forming an infinite sector in $D_1^{c}$. }
    \label{fig:case2}
\end{figure}

On this segment, the total field $u_1$ satisfies the traction-free boundary condition $$
T_\nu(u_1)= 0\ \text{on } \gamma_0 .
$$ 
 Let $\Omega_0$ and $G_0$ denote, respectively, the bounded and unbounded connected components of $D_1^{\,c} \setminus \gamma_0$. By construction, $\partial\Omega_0$ consists of $\gamma_0$ together with a portion of $\partial D_1$.
Consequently,
\begin{equation*}
\begin{cases}
\mathcal{N}_\omega(u_1) = 0\ \text{in }\Omega_0 , \\
T_\nu(u_1)= 0\ \text{on } \partial \Omega_0 .
\end{cases}
\end{equation*}
To obtain a contradiction, we prove by following the approach used in \cite{elschner2019uniqueness}, that  the total field $u_1$ admits an analytic continuation to the entire plane $\mathbb{R}^2$ and finally utilizing the argument used in Case \ref{case: corner point}, we conclude the proof of desired uniqueness result. 

 Choose a continuous injective curve in the unbounded component $G_0$ and starting from $P_0\in \gamma_0$. In particular, we assume  $$Z:[0,\infty)\longrightarrow G_0$$ be a continuous curve such that $Z(0)=P_0 $ and $|Z(t)|\to\infty$ as $t\to\infty$.  The curve  $Z$ will be used to determine successively the sides across which the analytic continuation is performed.
 
 Let $\Lambda_0$ denote the straight line containing $\gamma_0$, and let $R_{\Lambda_0}$ denote the reflection with respect to  $\Lambda_0$. We denote by $\mathcal{\widetilde B}_0$, the extension operator associated with the traction--free boundary condition on $\gamma_0 \subset \Lambda_0$. By the rotational and translation invariance of the Navier equation, $\mathcal{\widetilde B}_0$ is obtained from the extension operator in \eqref{eq:3.8} after a suitable rigid transformation.

 Set $
u_1^{(0)}(x) := u_1(x)
$ for $x\in \Omega_0$. By Lemma \ref{lem:3}-(i), the function
$
u_1^{(0)}
$ can be analytically continued across $\Lambda_0$ into the reflected domain $\Omega_1 :=R_{\Lambda_0}(\Omega_0)$ through the extension operator $\mathcal{\widetilde B}_0$. We define the extended field $u_1^{(1)}(x) :=(R_{\Lambda_0}\circ \mathcal{\widetilde B}_0)\; u_1^{(0)}(x)$ in $\Omega_1 \cup D_1^{\,c}$. Then $u_1^{(1)} = R_{\Lambda_0}\circ u_1^{(0)}\circ R_{\Lambda_0}$ in $\Omega_1$, and by Proposition \ref{pro}, $u_1^{(1)}$ satisfies the Navier equation in $\Omega_1$ together with the traction-free boundary condition on $\partial \Omega_1$, that is,
\begin{equation*}
\begin{cases}
\mathcal{N}_{\omega}(u_1^{(1)})= 0
\  \text{in } \Omega_1 , \\
T_\nu(u_1^{(1)})= 0
\  \text{on } \partial \Omega_1 .
\end{cases}
\end{equation*}
As $\Omega_1$ is bounded and the curve $Z(t)$ tends to infinity, we have
$
\partial \Omega_1 \cap \{ Z(t) : t > 0 \} \neq \emptyset .
$
We set $$
t_1 := \sup \{ t : \Omega_1 \cap Z(t) \neq \emptyset \} 
 ,\  \   P_1 := Z(t_1).$$ By the continuity of the curve, $P_1$ lies on $\partial \Omega_1$.  Moreover, since $Z(t)$ is injective, we have $P_1 \neq P_0$ and $\Omega_1 \cap \{ Z(t) : t > t_1 \} = \emptyset$.
Without loss of generality,  we may assume that 
$
P_1
$
is not a corner point of $\partial \Omega_1$ and hence we get that  $P_1$ lies on the open line segment $\gamma_1$  of $\partial \Omega_1$. 
Continuing like this, for  each $n \geq 2$, we denote by  $\Lambda_n$ straight line containing the open line segment $\gamma_n$ and let $R_{\Lambda_n}$ be the reflection with respect to  $\Lambda_n$. We further denote by $\mathcal{\widetilde B}_n$ the operator that extends solutions of the Navier equation across $\gamma_n \subset \Lambda_n$ under the traction-free boundary condition.

More precisely, for an open line segment  $\gamma_1 \subset \partial \Omega_1$ containing point  $P_1$, we define
$
\Omega_2 := R_{\Lambda_1}(\Omega_1) .
$
By Lemma \ref{lem:3} (i), the field $u_1^{(1)}$ admits an extension across $\gamma_1$ into $\Omega_2$. Consequently, we define the extended field
$
u_1^{(2)}(x) :=(R_{\Lambda_1}\circ \mathcal{\widetilde B}_1) u_1^{(1)}(x)$ for $x \in \Omega_2 \cup D_1^{c}$.
Then we have  
$$
u_1^{(2)}= (R_{\Lambda_1}\circ \mathcal{\widetilde B}_1)\circ (R_{\Lambda_0}\circ \mathcal{\widetilde B}_0) u_1\quad \mbox{in}\; D_1^{\,c} 
$$
 and
$
u_1^{(2)}(x) =R_{\Lambda_1}\circ u_1^{(1)}\circ R_{\Lambda_1}$ in $ \Omega_2 .
$

Hence, $u_1^{(2)}$ is a solution to the Navier equation in $\Omega_2$ together with the traction-free boundary condition on $\partial \Omega_2$, that is,
\begin{equation*}
\begin{cases}
\mathcal{N}_{\omega}(u_1^{(2)})= 0
\  \text{in } \Omega_2 , \\
T_\nu(u_1^{(2)})= 0
\  \text{on } \partial \Omega_2 .
\end{cases}
\end{equation*}
Continuing along the curve $Z(t)$, we choose a point
$
P_2 := Z(t_2)$ for some $ t_2 > t_1,$
 with $ P_2 \neq P_1$,
and a line segment $\gamma_2 \subset \partial \Omega_2$ such that
$
P_2 \in \gamma_2
\  \text{and} \ 
\Omega_2 \cap \{ Z(t) : t > t_2 \} = \emptyset .
$
Proceeding inductively, we can construct a sequence of polygonal domains
$$
\Omega_N := [R_{\Lambda_{N-1}} R_{\Lambda_{N-2}} \cdots R_{\Lambda_1}](\Omega_1), \  N \ge 1,
$$
and corresponding extension functions
$$
u_1^{(N)}(x) :=[(R_{\Lambda_{N-1}}\circ \mathcal{\widetilde B}_{N-1})\circ (R_{\Lambda_{N-2}}\circ \mathcal{\widetilde B}_{N-2})\cdots  (R_{\Lambda_1}\circ \mathcal{\widetilde B}_1)\circ (R_{\Lambda_0}\circ \mathcal{\widetilde B}_0)  u_1](x),
\  x \in D_1^{\,c} \cup \Omega_N ,
$$

such that $u_1^{(N)}$ is a solution to the Navier equation in $\Omega_N$ together with the traction-free boundary condition on $\partial \Omega_N$, that is,
\begin{equation*}
\begin{cases}
\mathcal{N}_{\omega}(u_1^{(N)})=0
\  \text{in } \Omega_N , \\
T_\nu(u_1^{(N)})= 0
\  \text{on } \partial \Omega_N .
\end{cases}
\end{equation*}
Continuing this construction, we assume that
$$\{ Z(t) : t > t_N \} \cap \Omega_N = \emptyset\ \text{for some} \  t_N > t_{N-1}$$ and let
$
P_N := Z(t_N) \in \gamma_N ,
$
where $\gamma_N \subset \partial \Omega_N$ is a line segment.
Since the curve $Z(t)$ remains in the connected set $D_1^c$ and tends to infinity, it follows from \cite[Lemma 3.3]{elschner2019uniqueness} that

$$
t_N \to \infty \  \text{as } N \to \infty .
$$
Consequently, after finitely many reflections, we get that  either the line $\Lambda_N$ lies entirely in $D_1^{\,c}$,
or the segment $\gamma_N$ together with one of its adjacent edges of $\partial \Omega_N$
extends to two half-lines in $D_1^{\,c}$. In the first situation, Lemma \ref{lem:3} (ii) implies that the field $u_1^{(N)}$ admits an analytic continuation to the entire plane $\mathbb{R}^2$. In the second situation, the two exterior half-lines bound an unbounded sector contained in $D_1^c$. Since $u_1^{(N)}$  satisfies the Navier equation in $D_1^c$ and traction--free boundary condition on the two half-lines forming the boundary of this sector,  Lemma {\ref{lem:2}} yields an analytic continuation of $u_1^{(N)}$ to the whole plane $\mathbb{R}^2$. Thus, in either situation, $u_1^{(N)}$ extends analytically to $\mathbb{R}^2$. By successively reversing the above reflections, we conclude that $
u_1^{(N)}, u_1^{(N-1)}, \ldots, u_1^{(0)} = u_1
$ also admit analytic continuation to the whole plane. In particular, $u_1$ is an entire solution of the Navier equation.
We can now proceed as in Case (i). Since
$$
u_1=u^{\mathrm{in}}+u_1^{\mathrm{sc}}
$$
and both $u_1$ and $u^{\mathrm{in}}$ are entire solutions of the Navier equation, it follows that the scattered field $u_1^{\mathrm{sc}}$ is an entire radiating solution. Hence, by the radiation condition and Rellich's lemma, 
$$
u_1^{\mathrm{sc}}\equiv0.
$$
Consequently,
$$
u_1=u^{\mathrm{in}}\ \text{in }D_1^c.
$$
Since $u_1$ satisfies the traction-free boundary condition on $\partial D_1$, we obtain
$
T_\nu(u^{\mathrm{in}})=0\ \text{on }\partial D_1.
$
The argument established in Case \ref{case: corner point},  then gives
$$
c_p=c_s=0,
$$
which contradicts the assumption
$$
|c_p|+|c_s|\neq0.
$$

Therefore, the assumption $D_1\neq D_2$ cannot hold, and hence
$
D_1=D_2.
$  This completes the proof.
\end{proof}

As a bi-product the uniqueness proof, we can conclude that the total field or the scattered field must be 'singular' at each corner point of the obstacle.
\begin{corollary}\label{Co-3.5}
Let $u=u^{in}+u^{sc}$ be the unique total field of the elastic scattering problem \eqref{eq:7}-\eqref{eq:9}. Then $u$ and $u^{sc}$ cannot be real-analytic around each corner of the traction-free obstacle $D$. 	
\end{corollary}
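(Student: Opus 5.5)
We argue by contradiction, following the corner case (Case~\ref{case: corner point}) of the proof of Theorem~\ref{thm:main_result}. Since $u^{in}$ is an entire real-analytic function, $u$ is real-analytic near a point if and only if $u^{sc}$ is. It therefore suffices to treat the total field $u$.

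Suppose $u$ is real-analytic in a disc $B_\epsilon(P_0)$ around some corner $P_0$ of $\partial D$. Let $\Gamma_0,\Gamma_1$ be the two edges of $\partial D$ meeting at $P_0$. After a rigid motion, $P_0=0$ and the lines containing $\Gamma_0,\Gamma_1$ are $\Lambda_0,\Lambda_1$ in the form \eqref{eq:Lj_def}. Inside $B_\epsilon(P_0)$ the real-analytic field $u$ solves $\mathcal N_\omega u=0$ on the exterior side, hence on all of $B_\epsilon(P_0)$ by analytic continuation, since $\mathcal N_\omega u$ is analytic and vanishes on an open set. Likewise $T_\nu(u)$ is an analytic function along each full line through $P_0$, and it vanishes on the edge segments. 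So $T_\nu(u)=0$ on $\Lambda_0\cap B_\epsilon$ and $\Lambda_1\cap B_\epsilon$, including the parts that lie inside $D$.

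Consider the sector $A_0\cap B_\epsilon$ bounded by the two lines. If the interior angle of $D$ at $P_0$ is not $\le\pi/2$, we use the complementary or opposite sector so that the opening angle $\theta_0$ lies in $(0,\pi/2]$. Since $u$ is analytic in the whole disc, any of these sectors is admissible. We then run the reflection argument of Lemma~\ref{lem:2} locally. Theorem~\ref{thm:reflection nav} is applied on symmetric neighbourhoods inside $B_\epsilon$ successively across $\Lambda_1,\Lambda_2,\dots$. We need no global domain: the reflections only have to be carried out near $P_0$. The real task is to propagate analyticity outward.

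The step I expect to be the main obstacle is passing from this local information to an entire extension. I would first try to exploit the local reflection structure directly: the reflection operators $\widetilde{\mathcal B}$ are built from $\mathcal B$ and the Green's tensor \eqref{eq:green}. Near $P_0$, uniqueness of analytic continuation forces $u$ to be invariant under the group generated by the reflections in the (continued) lines $\Lambda_j$, applied through the operators $\widetilde{\mathcal B}_j$. We then combine this with the path argument of Case~\ref{case: interior point}. The half-lines $\Lambda_0,\Lambda_1$ starting at $P_0$ lie in $D^c$ on the far side of the corner. Reflecting the obstacle and continuing $u$ along a curve to infinity, with \cite[Lemma 3.3]{elschner2019uniqueness} guaranteeing termination, yields either a line or a sector in the exterior on whose boundary $T_\nu u=0$. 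Lemma~\ref{lem:3}(ii) or Lemma~\ref{lem:2} then gives an entire extension of $u$.

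Once $u$ is entire, $u^{sc}$ is an entire radiating solution of the Navier equation, so $u^{sc}\equiv0$ by Rellich's lemma. Hence $T_\nu(u^{in})=0$ on $\partial D$. The computation with $A_\nu,B_\nu$ and the linear independence of $e^{ik_p t(\tau\cdot d)}$ and $e^{ik_s t(\tau\cdot d)}$ in Case~\ref{case: corner point} then gives $c_p=c_s=0$. This contradicts $|c_p|+|c_s|\ne0$, so $u$, and therefore $u^{sc}$, cannot be real-analytic near any corner of $D$.
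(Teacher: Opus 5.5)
\textbf{The gap is the local-to-global step.} Your endgame matches the paper's: entire extension, then $u^{sc}\equiv0$ by Rellich, then $T_\nu(u^{in})=0$ on $\partial D$ and $c_p=c_s=0$. The paper's own proof is only the remark that one follows the proof of Theorem~\ref{thm:main_result}. The one step not already contained in that proof is passing from analyticity in $B_\epsilon(P_0)$ to a starting configuration of that proof. That means either two traction-free half-lines bounding a sector in $D^c$ (Case~\ref{case: corner point}), or a traction-free segment in $D^c$ cutting off a bounded component of $D^c$ (Case~\ref{case: interior point}). You do not carry this out, and the claims you offer for it are wrong.

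First, reflecting inside $B_\epsilon(P_0)$ produces nothing, because $u$ is already analytic there. The only new output a local run of Lemma~\ref{lem:2} could give is $T_\nu u=0$ on the rays $R_{\Lambda_1}(\Lambda_0),\dots$. Proposition~\ref{pro} gives this only for the field $R_{\Lambda_1}\circ u\circ R_{\Lambda_1}$, which need not coincide with $u$. The continuation \eqref{eq:3.8} is non-local, and $R_{\Lambda}\circ u\circ R_{\Lambda}=u$ would force $u\cdot\nu=0$ on $\Lambda$, which traction-free fields do not satisfy in general. For the same reason there is no ``group generated by the $\widetilde{\mathcal B}_j$'' leaving $u$ invariant: $\widetilde{\mathcal B}_j$ is available only once $T_\nu u=0$ on $\Lambda_j$ is known. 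Similarly, switching to a complementary sector to get $\theta_0\le\pi/2$ is possible only inside $B_\epsilon(P_0)$, whereas Lemma~\ref{lem:2} needs the sector globally in $D^c$.

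Second, the half-lines from $P_0$ need not lie in $D^c$. At a reentrant corner (interior angle $>\pi$), both continued edges enter $D$ immediately. At a convex corner, a continued edge may meet $\partial D$ again.

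\textbf{The convex case can be repaired.} By analyticity along the line inside the connected open set $D^c\cup B_\epsilon(P_0)$, $T_\nu u=0$ holds on the continuation of an edge beyond $P_0$, up to its first point $Q$ on $\partial D$. If such a $Q$ exists, the segment from $P_0$ to $Q$ cuts off a bounded component of $D^c$ with traction-free boundary. This is the initial situation of Case~\ref{case: interior point}. If neither continuation meets $\partial D$ again, connectedness of $D$ puts the sector between the two continuations into $D^c$, as in Case~\ref{case: corner point}.

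\textbf{The reentrant case is not covered.} There, no part of $\Lambda_0\cup\Lambda_1$ other than the edges themselves lies in $D^c$. Neither Lemma~\ref{lem:2} nor the path argument can then be started. Your proposal supplies no substitute, for instance a local analysis of traction-free corner singularities. As written, the argument therefore does not establish the statement at every corner.
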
	
Corollary \ref{Co-3.5} can be proved by contradiction  following the lines in the proof of Theorem~\ref{thm:main_result}. We omit its proof for brevity.	 

\section{Appendix}\label{Appen}
The following result is needed in our continuation and extension arguments. Although it appears to be well known, we present it here to make the exposition self-contained. 
\begin{proposition}\label{pro}
Suppose that $\mathcal{N}_\omega u := \mathcal{L}u+\omega^2 u=0$ in $\mathbb{R}^2$ and $T_\nu (u)=0$ on $\gamma$, where $\gamma \subset 	\mathbb{R}^2$ is a finite line segment. Denote by R the reflection with respect to some straight line and define $v:=R_{\Lambda}\circ u\circ R_{\Lambda}$ in $\mathbb{R}^2$. Then v satisfies the same Navier equation $\mathcal{N}_\omega u=0$ in $\mathbb{R}^2$ and the boundary condition $T_\nu (v)=0$ on $R(\gamma)$.
\end{proposition}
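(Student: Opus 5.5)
The plan is to verify both claims by direct computation, exploiting the fact that $R$ is an orthogonal linear isometry up to translation. After a rigid change of coordinates (translation and rotation leave $\mathcal{N}_\omega$ and $T_\nu$ invariant, since the operator is isotropic), we may assume $R=R_\Lambda$ is the reflection $x\mapsto Mx$ with $M=\mathrm{diag}(-1,1)$, $M=M^T=M^{-1}$. Then $v(x)=Mu(Mx)$, and the chain rule gives $\nabla v(x)=M(\nabla u)(Mx)M$ for the Jacobian matrix $\nabla v$.

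\textbf{Step 1 (the equation).} From this Jacobian formula, the trace is invariant: $\nabla\cdot v(x)=\mathrm{tr}(M\nabla u(Mx)M)=(\nabla\cdot u)(Mx)$. Hence $\nabla(\nabla\cdot v)(x)=M\,\nabla(\nabla\cdot u)(Mx)$. Also $\Delta v(x)=M(\Delta u)(Mx)$, because the Laplacian commutes with orthogonal changes of variables. Combining, we get $\mathcal{N}_\omega v(x)=M(\mathcal{N}_\omega u)(Mx)=0$.

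\textbf{Step 2 (the traction).} The cleanest route is to write $T_\nu u=\sigma(u)\nu$ with the stress tensor $\sigma(u)=\lambda(\nabla\cdot u)I+\mu(\nabla u+\nabla u^T)$. One should check that this agrees with the paper's form $2\mu\partial_\nu u+\lambda(\nabla\cdot u)\nu+\mu(\nabla\times u)\nu^\perp$. This is a short $2\times2$ computation: write $\nabla u^T=\nabla u-(\nabla u-\nabla u^T)$ and note that the antisymmetric part times $\nu$ equals $\pm(\nabla\times u)\nu^\perp$. This identification is the only point needing care; I expect it to be the main, though minor, obstacle, since sign conventions for $\nabla\times$ and $\nu^\perp$ must match.

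Once it holds, Step 1 gives $\sigma(v)(x)=M\sigma(u)(Mx)M$. The normal to $R(\gamma)$ at $x$ is $\nu_{R\gamma}(x)=M\nu_\gamma(Mx)$, up to a sign that is irrelevant because the condition is linear in $\nu$. Therefore
\[
T_{\nu_{R\gamma}}v(x)=M\sigma(u)(Mx)MM\nu_\gamma(Mx)=M\,(T_{\nu_\gamma}u)(Mx)=0
\]
for $x\in R(\gamma)$, since $Mx\in\gamma$.

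Finally, undoing the rigid change of coordinates preserves both statements. The same computation works verbatim for an arbitrary line, with $M$ the orthogonal part of the reflection, so the initial normalization is only for concreteness.
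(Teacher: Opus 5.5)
Your proposal is correct and follows essentially the same route as the paper: the chain rule gives $\mathcal{N}_\omega v(x)=M(\mathcal{N}_\omega u)(Mx)$, and the covariance $\sigma(v)(x)=M\sigma(u)(Mx)M^{\top}$ of the stress tensor then transports the traction-free condition to $R(\gamma)$. The paper simply asserts two points that you treat more carefully. First, $\sigma(u)\nu$ does equal the paper's $T_\nu u$, with a plus sign: with $(\nabla u)_{ij}=\partial_j u_i$ one gets $(\nabla u^{\top}-\nabla u)\nu=(\partial_2u_1-\partial_1u_2)(-\nu_2,\nu_1)^{\top}=(\nabla\times u)\nu^{\perp}$. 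Second, only the orthogonal part of $R$ should act on the values of $u$.
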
	
\begin{proof}
We first show that $v$ solves the Navier equation in the whole plane. For every $x=(x_1,x_2)\in\mathbb{R}^2$, a direct differentiation gives
\[
(\nabla\cdot v)(x)=(\nabla\cdot u)(Rx),\qquad
(\Delta v)(x)=R\,(\Delta u)(Rx),\qquad
\big(\nabla(\nabla\cdot v)\big)(x)=R\,\nabla(\nabla\cdot u)(Rx).
\]
Since $\mathcal{L}=\mu\,\Delta+(\lambda+\mu)\,\nabla(\nabla\cdot)$, it follows that
\[
(\mathcal{L}v)(x)=\mu\,(\Delta v)(x)+(\lambda+\mu)\,\big(\nabla(\nabla\cdot v)\big)(x)
=R\,\big[\mu\,\Delta u+(\lambda+\mu)\,\nabla(\nabla\cdot u)\big](Rx)=R\,(\mathcal{L}u)(Rx).
\]
Consequently,
\[
(\mathcal{N}_\omega v)(x)=(\mathcal{L}v)(x)+\omega^2v(x)
=R\,(\mathcal{L}u)(Rx)+\omega^2R\,u(Rx)=R\,(\mathcal{N}_\omega u)(Rx)=0
\]
for all $x\in\mathbb{R}^2$, where the last equality uses that $\mathcal{N}_\omega u=0$ in $\mathbb{R}^2$.

Next we show that the traction-free condition is preserved. Recall that the traction is given by
\[
T_\nu(u)=\sigma(u)\,\nu,\qquad
\sigma(u):=\lambda\,(\nabla\cdot u)\,I+2\mu\,\varepsilon(u),\qquad
\varepsilon(u):=\tfrac12\big(\nabla u+(\nabla u)^{\!\top}\big),
\]
where $\sigma(u)$ and $\varepsilon(u)$ denote the stress and strain tensors; this coincides with the expression for $T_\nu$ introduced in Section~\ref{mathematical formulation}. From $v=R\,u\circ R$ one obtains, for every $x\in\mathbb{R}^2$,
\[
\nabla v(x)=R\,(\nabla u)(Rx)\,R^{\!\top},\qquad
\varepsilon(v)(x)=R\,\varepsilon(u)(Rx)\,R^{\!\top},\qquad
(\nabla\cdot v)(x)=(\nabla\cdot u)(Rx).
\]
Hence, since $R\,I\,R^{\!\top}=I$,
\[
\sigma(v)(x)=\lambda\,(\nabla\cdot v)(x)\,I+2\mu\,\varepsilon(v)(x)=R\,\sigma(u)(Rx)\,R^{\!\top}.
\]
Now fix $x\in\gamma$ and let $\nu=\nu(x)$ be a unit normal to $\gamma$ at $x$. Setting $y:=Rx\in R(\gamma)$, we have $x=Ry$, and $\nu_y:=R\nu$ is a unit normal to $R(\gamma)$ at $y$. Therefore
\[
T_{\nu_y}(v)(y)=\sigma(v)(y)\,\nu_y=R\,\sigma(u)(x)\,R^{\!\top}R\,\nu=R\,\sigma(u)(x)\,\nu=R\,T_\nu(u)(x)=0,
\]
because $T_\nu(u)=0$ on $\gamma$. As $x$ runs over $\gamma$, the point $y=Rx$ runs over $R(\gamma)$. Hence we arrive at $T_\nu(v)=0$ on $R(\gamma)$. This completes the proof.
\end{proof}	

\begin{center}
{\Large \textbf{Acknowledgement}}
\end{center}
The work of G.~Hu is supported by the National Natural Science
Foundation of China (No.~12425112), the Fundamental Research
Funds for the Central Universities in China  (No.~63263073) and
a Key Program (No.~25JCZDJC00970) of Natural Science Foundation of Tianjin, China.
V.~Rana work is supported by a Senior Research Fellowship from the
University Grants Commission (UGC), Government of India.
The work of M.~Vashisth is supported by ARG-MATRICS grant ANRF/ARGM/2025/002368/MTR,
from ANRF, Government of India. This work is partially supported by the FIST program of the Department of Science and Technology, Government of India, Reference No. SR/FST/MS-I/2018/22(C).

\end{document}